\pgfmathsetmacro\weight{1/2}
\pgfmathsetmacro\third{1/3}
\pgfmathsetmacro\twothirds{2/3}
\tikzset{degil/.style={
            decoration={markings,
            mark= at position 0.5 with {
%                  \node[transform shape] (tempnode) {$\backslash$};
                  \node[transform shape] (tempnode) {$/$};
                  }
              },
              postaction={decorate}
}
} 
\setlist[enumerate]{topsep=5pt,itemsep=-0.4ex,leftmargin=8mm}
\setlist[itemize]{leftmargin=*}
\newtheorem{theorem}{Theorem}[section]
\newtheorem{lemma}[theorem]{Lemma}
\newtheorem{proposition}[theorem]{Proposition}
\newtheorem{corollary}[theorem]{Corollary}
\theoremstyle{definition}
\newtheorem{definition}[theorem]{Definition}
\newtheorem{remark}[theorem]{Remark}
\newcounter{syscounter}
\newenvironment{sysnum}{\begin{list}{($\Sigma{\arabic{syscounter}}$)}%
{\settowidth{\labelwidth}{($\Sigma4$)}
\settowidth{\leftmargin}{($\Sigma4$)~}%
\usecounter{syscounter}}}
{\end{list}}
\newcommand \N   {\mathbb{N}}
\newcommand \R   {\mathbb{R}}
\newcommand \A   {\mathcal{A}}
\newcommand \B   {\mathcal{B}}
\newcommand \K   {\mathcal{K}}
\newcommand \Kinf{\mathcal{K_\infty}}
\newcommand \KL  {\mathcal{KL}}
\newcommand \LL  {\mathcal{L}}
\newcommand \Uc   {\mathcal{U}}
\newcommand \srs   {\ \ \Rightarrow\ \ }
\newcommand \srsd   {\ \Rightarrow\ }
\newcommand \Iff   {\Leftrightarrow}
\newcommand \eps {\varepsilon}
\newtheorem*{Sol*}{Solution} 
\newif\ifAndo
\begin{document}
%
%% \title[short text for running head]{full title}
%\title[Uniform weak asymptotic gains and characterizations of ISS]
%{Uniform weak asymptotic gains and characterizations of input-to-state stability}
%
%\author{Andrii Mironchenko}
%\address{Faculty of Computer Science and Mathematics, University of
  %Passau, Germany}
%\curraddr{}
%\email{andrii.mironchenko@uni-passau.de (Corresponding author)}
%\thanks{This research was supported by German Research Foundation, the grant Wi 1458/13-1}
%
%%\author{Fabian R. Wirth}
%%\address{Faculty of Computer Science and Mathematics, University of
  %%Passau, Germany}
%%\curraddr{}
%%\email{fabian.(lastname)@uni-passau.de}
%%\thanks{}
%
%%    \subjclass is required.
%\subjclass[2010]{Primary 37B25, 37L15, 93D05, 93D09}
%
%%37 Dynamical systems and ergodic theory
%%37Bxx    Topological dynamics
%%37B25 Lyapunov functions and stability
%%37Lxx Infinite-dimensional dissipative dynamical systems 
%%37L15 Stability problems
%%37L45 Hyperbolicity, Lyapunov functions
%%93 System theory; control
%%93D05 Lyapunov and other classical stabilities (Lagrange, Poisson, $L^p, l^p$, etc.)
%%93D09 Robust stability
%
%\date{\today}
%
%\dedicatory{}
%
%\begin{abstract}
%
%\end{abstract}
%
%\maketitle

\title{%\LARGE \bf
%Restatements of input-to-state stability in infinite dimensions:\\ what goes wrong?
Criteria for input-to-state practical stability
}
\author{Andrii Mironchenko
\thanks{
This research has been supported by the DFG grant \href{http://www.fim.uni-passau.de/dynamische-systeme/forschung/input-to-state-stability-and-stabilization-of-distributed-parameter-systems/}{"Input-to-state stability and stabilization of distributed parameter systems"} (Wi1458/13-1).
}
\thanks{A. Mironchenko is with 
the Faculty of Computer Science and Mathematics, University of Passau,
94030 Passau, Germany.
Email: andrii.mironchenko@uni-passau.de. Corresponding author.
}
}

\maketitle
%\thispagestyle{empty}
%\pagestyle{empty}

%%%%%%%%%%%%%%%%%%%%%%%%%%%%%%%%%%%%%%%%%%%%%%%%%%%%%%%%%%%%%%%%%%%%%%%%%%%%%%%%
{
\begin{abstract}
For a broad class of infinite-dimensional systems, we characterize input-to-state practical stability (ISpS) using the uniform limit property and in terms of input-to-state stability. We specialize our results to the systems with Lipschitz continuous flows and evolution equations in Banach spaces. Even for the special case of ordinary differential equations our results are novel and improve existing criteria for ISpS.
\end{abstract}
}
%\textbf{Keywords}: input-to-state stability, nonlinear systems, infinite-dimensional systems.

%\begin{keywords}
\begin{IEEEkeywords}
input-to-state stability, nonlinear systems, practical stability, infinite-dimensional systems.
\end{IEEEkeywords}
%\end{keywords}

\section{Introduction}
\label{sec:introduction}

The concept of input-to-state stability (ISS), introduced in \cite{Son89}, has become indispensable for various branches of nonlinear control theory, such as robust stabilization of nonlinear systems \cite{FrK08}, design of nonlinear observers \cite{ArK01}, analysis of large-scale networks \cite{JTP94, DRW07}, etc.

However, in many cases it is impossible (as in quantized control) or too costly to construct a feedback, ensuring ISS behavior of the closed loop system. To address such applications, a relaxation of the ISS concept has been proposed in \cite{JTP94}, called input-to-state practical stability (ISpS, practical ISS). This concept is extremely useful for stabilization of stochastic control systems \cite{ZhX13}, { control under quantization errors \cite{ShL12, JLR09}}, sample-data control \cite{NKK15}, study of interconnections of nonlinear systems by means of small-gain theorems \cite{JTP94,JMW96}, etc. Practical ISS extends the earlier concept of practical asymptotic stability of dynamical systems \cite{LLM90}. 
{ In some works practical ISS is called ISS with bias, see e.g. \cite{JLR09,JLR11}.}

Criteria for ISS in terms of other stability properties are among foundational theoretical results in ISS of ordinary differential equations (ODEs). In \cite{SoW95} Sontag and Wang have shown
that ISS is equivalent to the existence of a smooth ISS Lyapunov function and in
\cite{SoW96} the same authors proved an ISS superposition
theorem, saying that ISS is equivalent to the limit property combined with a local stability.
{
Characterizations of ISS greatly simplify the proofs of other important results, such as small-gain theorems for ODEs \cite{DRW07} and hybrid systems \cite{CaT09, DaK13}, Lyapunov-Razumikhin theory for time-delay systems \cite{Tee98}, \cite{DKM12},
 non-coercive ISS Lyapunov theorems \cite{MiW17b}, relations between ISS and nonlinear $L_2 \to L_2$ stability \cite{Son98}, to name a few examples.
}

Characterizations of ISS for ODEs in \cite{SoW96} heavily exploit the topological structure of an underlying state space $\R^n$, as well as a special type of dynamics (ODEs). Trying to generalize these criteria to infinite-dimensional systems, we face fundamental difficulties: closed bounded balls are never compact in infinite-dimensional normed linear spaces, nonuniformly globally asymptotically stable nonlinear systems do not necessarily have bounded reachability sets, and even if they do, this still does not guarantee uniform global stability \cite{MiW17b}. 
These difficulties have been overcome in a recent work \cite{MiW17b}, where characterizations of ISS have been developed for a general class of control systems, encompassing evolution PDEs, differential equations in Banach spaces, time-delay systems, switched systems, ODEs, etc. 
Characterizations of local ISS of infinite-dimensional systems were obtained slightly earlier in \cite{Mir16}. 
The results in \cite{MiW17b} naturally extend criteria for ISS of ODEs developed in \cite{SoW96}.
New notions and results obtained in \cite{MiW17b} establish a solid background for a solution of further problems. The concept of a uniform limit has been useful in the theory of non-coercive Lyapunov functions \cite{MiW17b, MiW17a, Mir17a} and for characterization of practical uniform asymptotic stability \cite{Mir17a}.

{
Despite a great importance of practical ISS for nonlinear control theory, much less is known about characterizations of practical ISS even in ODE setting. 
Sontag and Wang have shown in \cite[Proposition VI.3]{SoW96} that an ODE system is ISpS iff it is compact ISS, i.e., there is a compact 0-invariant set $\A\subset \R^n$  so that the system has a uniform asymptotic gain (UAG) w.r.t. $\A$.
This is an interesting characterization, but UAG itself is a quite strong property and it may be hard to check it.
It would be desirable to obtain criteria for ISS in terms of weaker properties as limit property, which will be as powerful as characterizations of ISS given in \cite{SoW96} for ODEs and in \cite{MiW17b} for general infinite-dimensional systems.

\textit{In this paper we develop such criteria for practical ISS for a broad class of infinite-dimensional systems.} 
The understanding of the nature of practical ISS will be beneficial for the development of quantized and sample data controllers for infinite-dimensional systems and will give further insights into the ISS theory of infinite-dimensional systems, which is currently a hot topic \cite{MiI15b, KaK17a, PrM12, JNP16b, MaP11, Mir16, MiW17b, JSZ17, MKK17}.  

We prove in Section~\ref{sec:Criterion_for_ISpS} that \textit{a nonlinear infinite-dimensional control system $\Sigma$ possessing bounded reachability sets is practically ISS if and only if there is a bounded subset $\A$ of a state space so that $\Sigma$ has a uniform limit property (ULIM) w.r.t. $\A$}. This criterion can be used to prove ISpS of control systems. On the other hand, we show that any ISpS control system has a so-called complete uniform asymptotic gain property (CUAG), which is stronger than uniform asymptotic gain property (UAG) as defined in \cite{SoW96, MiW17b}.

An important difference of this criterion of ISpS to the criteria of ISS proved in \cite{SoW96, MiW17b} is that it does not involve any kind of stability w.r.t. the set $\A$ (which is necessary for ISS), which significantly simplifies verification of the ISpS property.

We base ourselves on machinery developed in \cite{MiW17b} for characterization of ISS of general infinite-dimensional systems, in particular, we use the notion of the uniform limit and results from \cite{MiW17b} related to this property. Additionally, we develop two further technical results which are of independent interest. 

Firstly, \textit{we introduce a CUAG property and show in Proposition~\ref{prop:CUAG_is_UAG_plus_BRS} that a control system possesses this property if and only if it has a UAG property and if its finite time reachability sets are bounded}.

Secondly, using this CUAG characterization we show in Proposition~\ref{prop:ULIM_plus_BRS_implies_UAG} that \textit{if a system has uniform limit property w.r.t. certain bounded set $\A$ of a state space $X$ and if this system has bounded finite time reachability sets, then there is a set $\B \supset \A$ so that $\Sigma$ has a (much stronger than ULIM) CUAG property w.r.t. $\B$}. 
In our proof we construct a family of such sets.

For systems with Lipschitz continuous flows, we improve some of our criteria in Section~\ref{sec:Special_Classes_of_systems} by showing that $\Sigma$ is ISpS if and only if there is a bounded invariant (under uniformly bounded controls) set $\A$ so that $\Sigma$ is ISS w.r.t. $\A$. 
}

Even specialized to ODE systems our results are novel. In Section~\ref{sec:ISpS_ODEs} we show that \textit{an ODE system
 $\Sigma$ is practically ISS $\Iff$ there is a bounded set $\A$ so that $\Sigma$ has a  limit property w.r.t. $\A$ $\Iff$ $\Sigma$ is compact ISS}. This recovers \cite[Proposition VI.3]{SoW96} and considerably strengthens \cite[Lemma I.4]{SoW96}.

\subsection{Notation}

The following notation will be used throughout these notes. Denote $\R_+:=[0,+\infty)$. For an arbitrary set $S$ and
$n\in\N$ the $n$-fold Cartesian product is $S^n:=S\times\ldots\times
S$. 
%For normed linear spaces $X,Y$ we denote by $\mathcal{L}(X,Y)$ the space of
%linear bounded operators acting from $X$ to $Y$ and we abbreviate $\mathcal{L}(X):=\mathcal{L}(X,X)$.

{
Let $X$ be a normed linear space with a norm $\|\cdot\|$ and let $\A$ be a nonempty set in $X$. For any $x \in X$ we define a distance from $x\in X$ to $\A$ by $\|x\|_{\A}:=\inf_{y\in \A}\|x-y\|$. 
}
Define also $\|\A\|:=\sup_{x\in\A} \|x\|$.
The open ball in a normed linear space $X$ with radius $r$ around $\A \subset X$ is denoted by $B_r(\A):=\{x\in X :\  \|x\|_{\A}<r\}$.
For short, we denote 
%for $x\in X$ $B_r(x) := B_r(\{x\})$ and 
$B_r:=B_r(\{0\})$.
Similarly, \mbox{$B_{r,\Uc}:=\{u \in \Uc: \|u\|_\Uc < r\}$}.
The closure of a set $S \subset X$ w.r.t. norm $\|\cdot\|$ is denoted by $\overline{S}$.
{
With a slight abuse of notation we define $\overline{B_0(\A)}:=\A$ and $\overline{B_{0,\Uc}}=\{0\}$.
}

For the formulation of
stability properties the following classes of comparison functions are
useful:
%, see \cite{Hah67,Kel14}:
\begin{equation*}
\begin{array}{ll}
{\K} &:= \left\{\gamma:\R_+\rightarrow\R_+\left|\ \gamma\mbox{ is continuous, strictly} \right. \right. \\
&\phantom{aaaaaaaaaaaaaaaaaaa}\left. \mbox{ increasing and } \gamma(0)=0 \right\}, \\
{\K_{\infty}}&:=\left\{\gamma\in\K\left|\ \gamma\mbox{ is unbounded}\right.\right\},\\
{\LL}&:=\left\{\gamma:\R_+\rightarrow\R_+\left|\ \gamma\mbox{ is continuous and strictly}\right.\right.\\
&\phantom{aaaaaaaaaaaaaaaa} \text{decreasing with } \lim\limits_{t\rightarrow\infty}\gamma(t)=0\},\\
{\KL} &:= \left\{\beta:\R_+\times\R_+\rightarrow\R_+\left|\ \beta \mbox{ is continuous,}\right.\right.\\
&\phantom{aaaaaa}\left.\beta(\cdot,t)\in{\K},\ \beta(r,\cdot)\in {\LL},\ \forall t\geq 0,\ \forall r >0\right\}. \\
\end{array}
\end{equation*}
%\begin{equation*}
%\begin{array}{ll}
%{\K} &:= \left\{\gamma:\R_+ \to \R_+ \left|\ \right. \gamma\mbox{ is continuous and strictly increasing, }\gamma(0)=0\right\},\\
%{\K_{\infty}}&:=\left\{\gamma\in\K\left|\ \gamma\mbox{ is unbounded}\right.\right\},\\
%{\LL}&:=\left\{\gamma:\R_+ \to \R_+ \left|\ \gamma\mbox{ is continuous and decreasing with}\right.
 %\lim\limits_{t\rightarrow\infty}\gamma(t)=0 \right\},\\
%{\KL} &:= \left\{\beta: \R_+^2 \to \R_+ \left|\ \right. \beta(\cdot,t)\in{\K},\ \forall t \geq 0,\  \beta(r,\cdot)\in {\LL},\ \forall r >0\right\}.
%\end{array}
%\end{equation*}

\section{Preliminaries}
\label{sec:prelim}

In this paper, we consider abstract axiomatically defined time-invariant and forward complete systems on the state space $X$ which are subject to a shift-invariant set of
inputs $\Uc$.

%\index{control system}
\begin{definition}
\label{Steurungssystem}
Consider the triple $\Sigma=(X,\Uc,\phi)$ consisting of 
%\index{state space}
%\index{space of input values}
%\index{input space}
\begin{enumerate}[label=(\roman*)]
    \item A normed linear space $(X,\|\cdot\|)$, called the {state space}, endowed with the norm $\|\cdot\|$.
    \item A set of input values $U$, which is a nonempty subset of a certain normed linear space.
    \item A {space of inputs} $\Uc \subset \{f:\R_+ \to U\}$, $0 \in\Uc$          
endowed with a norm $\|\cdot\|_{\Uc}$ satisfying two axioms:
                    
\textit{The axiom of shift invariance} states that for all $u \in \Uc$ and all $\tau\geq0$ the time
shift $u(\cdot + \tau)$ belongs to $\Uc$ with \mbox{$\|u\|_\Uc \geq \|u(\cdot + \tau)\|_\Uc$}.
%the latter is used in the proof of Lemma 6.

\textit{The axiom of concatenation} is defined by the requirement that for all $u_1,u_2 \in \Uc$ and for all $t>0$ the concatenation of $u_1$ and $u_2$ at time $t$
\begin{equation}
u(\tau) := 
\begin{cases}
u_1(\tau), & \text{ if } \tau \in [0,t], \\ 
u_2(\tau-t),  & \text{ otherwise},
\end{cases}
\label{eq:Composed_Input}
\end{equation}
belongs to $\Uc$.
Furthermore, if $u_2 \equiv 0$, then \mbox{$\|u\|_{\Uc}\leq \|u_1\|_{\Uc}$}.

    \item A transition map $\phi:\R_+ \times X \times \Uc \to X$.
\end{enumerate}
The triple $\Sigma$ is called a (forward complete) control system, if the following properties hold:

\begin{sysnum}
    \item\label{axiom:FC} Forward completeness: for every $(x,u) \in X \times \Uc$ and
          for all $t \geq 0$ the value $\phi(t,x,u) \in X$ is well-defined.
    \item\label{axiom:Identity} The identity property: for every $(x,u) \in X \times \Uc$
          it holds that $\phi(0, x,u)=x$.
    \item Causality: for every $(t,x,u) \in \R_+ \times X \times
          \Uc$, for every $\tilde{u} \in \Uc$, such that $u(s) =
          \tilde{u}(s)$, $s \in [0,t]$ it holds that $\phi(t,x,u) = \phi(t,x,\tilde{u})$.
    \item \label{axiom:Continuity} Continuity: for each $(x,u) \in X \times \Uc$ the map $t \mapsto \phi(t,x,u)$ is continuous.
  %\item \label{axiomsyscont}
          %Continuity with respect to initial conditions: for all $d \in
          %\Uc, T\geq 0$ the map
          %$\phi(\cdot,\cdot,d): X \to C([0,T],X)$, $x \mapsto
          %\phi_{[0,T]}(\cdot,x,d)$ is continuous with respect to the
          %supremum norm on $C([0,T],X)$.
    %\item \label{axiom:Lipschitz} Lipschitz continuity w.r.t. ininital conditions on compact intervals: For any $\tau>0$ and any $R>0$ there exists $L>0$ so that for any $x,y \in X$: $\|x\| \leq R$, $\|y\| \leq R$, for all $t \in [0,\tau]$ and for all $d \in \Uc$ it holds that 
%\begin{eqnarray}
%\|\phi(t,x,d) - \phi(t,y,d) \| \leq L \|x-y\|.
%\label{eq:Flow_is_Lipschitz}
%\end{eqnarray}                
%\index{semigroup property}
%     \item Semigroup property: for all $t,h \geq 0$, for all $x \in X$, $d_1,d_2 \in \Uc$ it follows
% $\phi(h,\phi(t,x,d_1),d_2)=\phi(t+h,x,d)$, where $d$ is defined as in \eqref{eq:Composed_Input}.    
        \item \label{axiom:Cocycle} The cocycle property: for all $t,h \geq 0$, for all
                  $x \in X$, $u \in \Uc$ we have
$\phi(h,\phi(t,x,u),u(t+\cdot))=\phi(t+h,x,u)$.
\end{sysnum}
\end{definition}

\begin{remark}
\label{rem:Additional_Restriction}
In compare to the paper \cite{MiW17b}, upon which this note is based, we impose here an additional requirement on the space $\Uc$, that the concatenation of any input $u$ with a zero input has the norm which is not larger than $\|u\|_{\Uc}$. This condition is satisfied by most of the "natural" input spaces.
\end{remark}

\begin{definition}
\label{def:s-invariance}
{
Let a control system $\Sigma=(X,\Uc,\phi)$, a real number $s\geq0$ and $\A \subset X$, $\A \neq \emptyset$ be given.}
\begin{itemize}
{ \item $\A$ is called $s$-invariant if $\phi(t,x,u) \in \A$ for all $t\geq 0$, $x\in \A$ and $u\in \overline{B_{s,\Uc}}$.
}
    \item An $s$-invariant set $\A \subset X$ is called robustly $s$-invariant if for every $\eps > 0$ and any $h>0$ there is a $\delta = \delta (\eps,h)>0$, so that 
\begin{eqnarray}
 t\in[0,h],\ \|x\|_{\A} \leq \delta,\ \|u\|_{\Uc} \leq \delta \; \Rightarrow \;  \|\phi(t,x,u)\|_{\A} \leq \eps.
\label{eq:RobInvSet}
\end{eqnarray}
    %\item A 0-invariant point $x_0 \in X$ is called an equilibrium point
    %\item A robust 0-invariant point $x_0 \in X$ is called a robust equilibrium point.
\end{itemize}
\end{definition}

{
\begin{remark}
\label{rem:0-invariance_and_CEP_property} 
If $\A=\{0\}$, then robust 0-invariance of such $\A$ is precisely the continuity of $\phi$ at the trivial equilibrium, see \cite{MiW17b}.
The concept of $s$-invariance seems to be less standard, but it helps a lot to establish the relation between ISS and ISpS in Theorem~\ref{thm:ISpS_equivalent_ISS_wrt_bounded_sets}, due to the fact that it is much easier to show robustness of $s$-invariant sets with $s>0$ than with $s=0$, see Lemma~\ref{lem:RobustInvariance_of_s_invariant_sets}.
\end{remark}

}

The central notion of this paper is:
\begin{definition}
\label{Def:ISpS_wrt_set}
A control system $\Sigma=(X,\Uc,\phi)$ is called {\it (uniformly) input-to-state practically stable
(ISpS) w.r.t. a nonempty set $\A  \subset X$}, if there exist $\beta \in \KL$, $\gamma \in \Kinf$ and $c>0$
such that for all $ x \in X$, $ u\in \Uc$ and $ t\geq 0$ the following holds:
\begin {equation}
\label{isps_sum}
\| \phi(t,x,u) \|_{\A} \leq \beta(\| x \|_{\A},t) + \gamma( \|u\|_{\Uc}) + c.
\end{equation}

If ISpS property w.r.t. $\A$ holds with $c:=0$, then $\Sigma$ is called \textit{input-to-state stable (ISS)} w.r.t $\A$.
\end{definition}

{
\textit{In what follows we always assume that the set, w.r.t. which the stability property is considered (usually denoted by $\A$) is always nonempty.}
}

We are interested in practical ISS w.r.t. bounded subsets of $X$. The following simple result holds:
\begin{proposition}
\label{prop:Invariance_of_ISpS_wrt_bounded_sets}
Let $\Sigma$ be a control system as in Definition~\ref{Steurungssystem}.
If $\Sigma$ is ISpS w.r.t. a certain bounded set $\A_1 \subset X$, then $\Sigma$ is ISpS w.r.t. any bounded subset of $X$.
\end{proposition}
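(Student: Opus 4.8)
The plan is to reduce ISpS with respect to an arbitrary bounded set $\A_2$ to the assumed ISpS with respect to $\A_1$, by observing that the distance functions $x\mapsto\|x\|_{\A_1}$ and $x\mapsto\|x\|_{\A_2}$ differ only by a bounded amount; the additive constant $c$ in \eqref{isps_sum} is then large enough to absorb this discrepancy.

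First I would prove the geometric estimate that for any two nonempty bounded sets $\A_1,\A_2\subset X$ there is a constant $D\geq0$ with
\[
\bigl|\,\|x\|_{\A_1}-\|x\|_{\A_2}\,\bigr|\leq D \qquad\text{for all } x\in X.
\]
To obtain it, fix any $a_2^0\in\A_2$ and note that for every $a_1\in\A_1$ the triangle inequality gives $\|a_1\|_{\A_2}\leq\|a_1-a_2^0\|\leq\|\A_1\|+\|\A_2\|=:D$, which is finite precisely because both sets are bounded. Then for every $x\in X$ and every $a_1\in\A_1$ we have $\|x\|_{\A_2}\leq\|x-a_1\|+\|a_1\|_{\A_2}\leq\|x-a_1\|+D$; taking the infimum over $a_1\in\A_1$ yields $\|x\|_{\A_2}\leq\|x\|_{\A_1}+D$. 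Interchanging the roles of $\A_1$ and $\A_2$ gives the reverse inequality with the same $D$, which proves the claim.

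Next I would insert this estimate into the ISpS bound \eqref{isps_sum} for $\A_1$. Writing $\beta\in\KL$, $\gamma\in\Kinf$, $c>0$ for the ISpS data of $\A_1$, and using $\|\phi(t,x,u)\|_{\A_2}\leq\|\phi(t,x,u)\|_{\A_1}+D$ together with the monotonicity of $\beta(\cdot,t)$ and $\|x\|_{\A_1}\leq\|x\|_{\A_2}+D$, I get
\[
\|\phi(t,x,u)\|_{\A_2}\leq\beta\bigl(\|x\|_{\A_2}+D,\,t\bigr)+\gamma(\|u\|_\Uc)+c+D.
\]
To bring the first term into the required form I would use the elementary inequality $\beta(r+D,t)\leq\beta(2r,t)+\beta(2D,t)\leq\beta(2r,t)+\beta(2D,0)$, valid because $\beta(\cdot,t)$ is increasing and $\beta(2D,\cdot)$ is decreasing. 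Setting $\tilde\beta(r,t):=\beta(2r,t)\in\KL$ and $\tilde c:=c+D+\beta(2D,0)>0$ then yields $\|\phi(t,x,u)\|_{\A_2}\leq\tilde\beta(\|x\|_{\A_2},t)+\gamma(\|u\|_\Uc)+\tilde c$, that is, ISpS with respect to $\A_2$ with the same gain $\gamma$.

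The argument is short, and the only genuinely load-bearing step is the geometric estimate of the first part; note that it fails without boundedness of the sets, which is exactly why the statement is restricted to bounded subsets. Everything else is routine manipulation of comparison functions, and the freedom provided by the additive constant $c>0$ (absent in the ISS case) is what makes the passage between different reference sets possible.
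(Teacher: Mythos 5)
Your proof is correct and follows essentially the same route as the paper: both rest on the observation that distance functions to bounded sets differ by a uniformly bounded amount, combined with the estimate $\beta(a+b,t)\leq\beta(2a,t)+\beta(2b,t)$ to absorb that discrepancy into the practical constant. The only cosmetic difference is that the paper passes through the norm $\|\cdot\|$ (distance to $\{0\}$) as an intermediate step, applying the comparison inequality \eqref{eq:Relations_between_norms_1} twice, whereas you compare $\|\cdot\|_{\A_1}$ and $\|\cdot\|_{\A_2}$ directly with the single constant $D=\|\A_1\|+\|\A_2\|$.
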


\begin{proof}
  Let $\A_1$ be a bounded subset of $X$ and let $\Sigma=(X,\Uc,\phi)$ be ISpS w.r.t. $\A_1$.
Using a simple inequality
\begin{eqnarray}
\|x\| - \|\A_1\| \leq \|x\|_{\A_1} \leq \|x\| + \|\A_1\|,
\label{eq:Relations_between_norms_1}
\end{eqnarray}    
which holds for all $x\in X$, as well as the fact that $\beta(a+b,t)\leq \beta(2a,t) + \beta(2b,t)$ for all $a,b,t\geq0$, we arrive at
\begin{align*}
\| \phi(t,x,u) \| - \|\A_1\| &\leq \| \phi(t,x,u) \|_{\A_1} \\
&\leq \beta(\| x \|  +\|\A_1\|,t) + \gamma( \|u\|_{\Uc}) + c\\
&\leq \beta(2\| x \|,t) + \beta(2\|\A_1\|,t) + \gamma( \|u\|_{\Uc}) + c.
\end{align*}    
Defining $\tilde c:=\beta(2\|\A_1\|,0) + \|\A_1\| +c$ we see that
\begin{eqnarray}
\label{eq:tmp_ISpS_estimate}
\| \phi(t,x,u) \| \leq \beta(2\| x \|,t) + \gamma( \|u\|_{\Uc}) + \tilde{c}.
\end{eqnarray}    
Using \eqref{eq:tmp_ISpS_estimate} and \eqref{eq:Relations_between_norms_1} once again, we obtain that $\Sigma$ is ISpS w.r.t. any bounded $\A \subset X$.
\end{proof}

Proposition~\ref{prop:Invariance_of_ISpS_wrt_bounded_sets} motivates the following definition:
\begin{definition}
\label{Def:ISpS}
A control system $\Sigma=(X,\Uc,\phi)$ is called \textit{ISpS}, if there is a bounded set $\A\subset X$ so that $\Sigma$ is ISpS w.r.t. $\A$.
\end{definition}

%\begin{remark}
%\label{rem:}
%If $\Sigma$ is ISS w.r.t. $\A\subset X$, then $\A$ is 0-invariant, which follows directly from the definition of ISS (for $x\in\A$ and $u:=0$).
%As we will see later, ISpS also implies existence of a 0-invariant superset of $\A$, which however does not have to coincide with $\A$ (or $B_c(\A)$).
%\end{remark}

%Even globally asymptotically stable infinite-dimensional systems do not always possess uniform bounds for its solutions on finite intervals (see {\color{red} \cite[Example 2]{MiW17b}}). The systems, having such bounds, deserve a special name.
%\begin{definition}
%\label{def:BRS}
%We say that $\Sigma=(X,\Uc,\phi)$ has bounded reachability sets (BRS), if for any $C>0$ and any $\tau>0$ it holds that 
%\[
%\sup_{\|x\|\leq C,\ \|u\|_{\Uc} \leq C,\ t \in [0,\tau]}\|\phi(t,x,u)\| < \infty.
%\]
%\end{definition}
%%Robustness of equilibrium points and boundedness of reachability sets are basic prerequisites for uniform global stability. 

{Our aim is to prove criteria for practical ISS in terms of more basic stability properties. Next we enlist the most important of such notions.
}
\begin{definition}
\label{def:ULIM_UAG_etc}
A control system $\Sigma=(X,\Uc,\phi)$ 
\begin{itemize}%[leftmargin=*]
    \item has \textit{bounded reachability sets (BRS)}, if for any $C>0$ and any $\tau>0$ it holds that 
\[
\sup_{\|x\|\leq C,\ \|u\|_{\Uc} \leq C,\ t \in [0,\tau]}\|\phi(t,x,u)\| < \infty.
\]
    \item is called {\it uniformly globally bounded (UGB)}, if there exist a bounded set $\A  \subset X$, functions $\sigma,\gamma \in \Kinf$ and $c>0$ such that for all $ x \in X$, and all $ u \in \Uc$  it holds that
\begin{equation}
\label{UGBAbschaetzung}
\left\| \phi(t,x,u) \right\|_{\A} \leq \sigma(\|x\|_{\A}) +\gamma(\|u\|_{\Uc}) + c \quad \forall t \geq 0.
\end{equation}

    \item has the {\it uniform asymptotic gain (UAG) property w.r.t. $\A \subset X$}, if there
          exists a
          $ \gamma \in \Kinf$ such that for all $ \eps, r >0$ there is a $ \tau=\tau(\eps,r) < \infty$ s.t. for all $u \in \Uc$ and all $x \in \overline{B_r(\A)}$
\begin{equation}    
\label{UAG_Absch}
t \geq \tau \quad \Rightarrow \quad \|\phi(t,x,u)\|_{\A} \leq \eps + \gamma(\|u\|_{\Uc}).
\end{equation}

    \item has the \textit{limit property (LIM) w.r.t. $\A \subset X$} if there is a $\gamma\in\Kinf$:
for all $x\in X$, $u \in \Uc$ and $\eps>0$ there is a $t=t(x,u,\eps)$:
\[
\|\phi(t,x,u)\|_{\A} \leq \eps + \gamma(\|u\|_{\Uc}).
\]

  \item has the \textit{uniform limit property (ULIM) w.r.t. $\A \subset X$}, if there exists $\gamma\in\Kinf$ so that for all 
    $\eps>0$ and all $r>0$ there is a $\tau = \tau(\eps,r)$ s.t. for all $u\in\Uc$:
\begin{eqnarray}
\hspace{-3mm} \|x\|_{\A} \leq r \srsd \exists t\leq \tau(\eps, r):\; \|\phi(t,x,u)\|_{\A} \leq \eps + \gamma(\|u\|_{\Uc}).
\label{eq:ULIM_ISS_section}
\end{eqnarray}

\end{itemize}
\end{definition}

\begin{remark}
\label{rem:FC_and_BRS}
For ODEs forward completeness implies BRS property, see \cite[Proposition 5.1]{LSW96}. For $\infty$-dimensional systems 
this is not always the case (see \cite[Example 2]{MiW17b}).
\end{remark}

Note that trajectories of ULIM systems do not only approach the ball $B_{\gamma(\|u\|_{\Uc})}(\A)$ (as trajectories of LIM systems do), but they do it uniformly. Indeed, the time of approachability $\tau$ depends only on the norm of the state and $\eps$ and does not depend on he state itself.

{
Uniform asymptotic gain property assures that the trajectories possess a uniform convergence rate. However, UAG property per se does not guarantee that the solutions possess any kind of uniform global bounds 
(one can construct examples of control systems, illustrating this fact, using ideas from \cite[Example 2]{MiW17b}).
Since it is often desirable both to have uniform attraction rates as well as uniform bounds on solutions, we introduce (motivated by \cite[Definition 4.1.3]{Gru02b}, where a similar concept with $\gamma=0$ has been employed) a new notion:
\begin{definition}
\label{def:CUAG}
We say that a control system $\Sigma$ satisfies the completely uniform asymptotic gain property (CUAG) w.r.t. $\A \subset X$, if there are $\beta\in\KL$, 
$\gamma \in \Kinf$ and $C>0$ s.t. for all $x\in X$,\ $u\in\Uc$,\ $t\geq 0$ it holds that:
\begin {equation}
\label{CUAG_sum}
\| \phi(t,x,u) \|_{\A} \leq \beta(\| x \|_{\A} + C,t) + \gamma( \|u\|_{\Uc}).
\end{equation}
\end{definition}
In Section~\ref{sec:CUAG_characterization} we show that CUAG is equivalent to a combination of BRS and UAG properties.

}

\section{Characterizations of ISpS}
\label{sec:Criterion_for_ISpS}

In this section we prove the following characterization of ISpS:
\begin{theorem}
\label{thm:RFC_weak_attr_imply_pUGAS}
{ Let $\Sigma$ be a control system as in Definition~\ref{Steurungssystem}.} The following statements are equivalent:
\begin{enumerate}[label=(\roman*)]
    \item\label{enum:ISpS_item1} $\Sigma$ is ISpS
    %\item\label{enum:ISpS_item11} $\Sigma$ is UGB and there is a bounded 0-invariant set $\A\subset X$ so that $\Sigma$ is UAG w.r.t. $\A$.
    \item\label{enum:ISpS_item2} {There is a bounded 0-invariant set $\A\subset X$ so that $\Sigma$ is CUAG w.r.t. $\A$.}
    \item\label{enum:ISpS_item3} $\Sigma$ is BRS and  there is a bounded set $\A\subset X$ so that $\Sigma$ is ULIM w.r.t. $\A$.
\end{enumerate}
\end{theorem}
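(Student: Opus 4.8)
The plan is to prove the cycle \ref{enum:ISpS_item1} $\Rightarrow$ \ref{enum:ISpS_item3} $\Rightarrow$ \ref{enum:ISpS_item2} $\Rightarrow$ \ref{enum:ISpS_item1}. The two easy arcs are \ref{enum:ISpS_item1} $\Rightarrow$ \ref{enum:ISpS_item3} and \ref{enum:ISpS_item2} $\Rightarrow$ \ref{enum:ISpS_item1}, which are essentially bookkeeping with comparison functions and elementary distance-to-a-set inequalities; the substance is concentrated in \ref{enum:ISpS_item3} $\Rightarrow$ \ref{enum:ISpS_item2}, where I would lean on Proposition~\ref{prop:ULIM_plus_BRS_implies_UAG} as the technical engine and then perform the construction that produces a \emph{$0$-invariant} set.

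For \ref{enum:ISpS_item1} $\Rightarrow$ \ref{enum:ISpS_item3}, I would first use Proposition~\ref{prop:Invariance_of_ISpS_wrt_bounded_sets} to assume without loss of generality that \eqref{isps_sum} holds with respect to a fixed bounded set $\A$, with data $\beta,\gamma,c$. The BRS property is then immediate: bounding $\beta(\|x\|_{\A},t)\leq\beta(\|x\|_{\A},0)$ makes the right-hand side of \eqref{isps_sum} finite and uniform over $t\in[0,\tau]$ and over bounded balls of states and inputs. For ULIM I would enlarge the set to $\B:=\overline{B_c(\A)}$; a short computation with distances gives $\|y\|_{\B}\leq\max\{0,\|y\|_{\A}-c\}$ and $\|x\|_{\A}\leq\|x\|_{\B}+c$, so \eqref{isps_sum} turns into $\|\phi(t,x,u)\|_{\B}\leq\beta(\|x\|_{\A},t)+\gamma(\|u\|_{\Uc})$, with the additive constant $c$ absorbed into $\B$. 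Given $\eps,r>0$, choosing $\tau=\tau(\eps,r)$ with $\beta(r+c,\tau)\leq\eps$ and evaluating at $t=\tau$ yields \eqref{eq:ULIM_ISS_section}, i.e. ULIM with respect to the bounded set $\B$, with a single gain $\gamma$ inherited from \eqref{isps_sum}.

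The implication \ref{enum:ISpS_item3} $\Rightarrow$ \ref{enum:ISpS_item2} is the heart of the argument. I would apply Proposition~\ref{prop:ULIM_plus_BRS_implies_UAG} directly: from ULIM with respect to $\A$ together with BRS it produces a bounded set $\B\supset\A$ for which $\Sigma$ is CUAG, say with data $\beta,\gamma,C$. The remaining, and in my view trickiest, point is upgrading $\B$ to a bounded \emph{$0$-invariant} set, since Proposition~\ref{prop:ULIM_plus_BRS_implies_UAG} is silent about invariance. For this I would take the reachability set of $\B$ under the zero input, $\A':=\{\phi(s,y,0):y\in\B,\ s\geq 0\}$. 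The cocycle property \ref{axiom:Cocycle} together with shift-invariance of the zero input gives $\phi(h,\phi(s,y,0),0)=\phi(s+h,y,0)$, so $\A'$ is $0$-invariant; and the CUAG estimate \eqref{CUAG_sum} with $u=0$ and $\|y\|_{\B}=0$ yields $\|\phi(s,y,0)\|_{\B}\leq\beta(C,0)$, so $\A'$ is bounded. Finally, since $\B\subseteq\A'$ and both sets are bounded, one has $\|\cdot\|_{\A'}\leq\|\cdot\|_{\B}\leq\|\cdot\|_{\A'}+M$ with $M:=\sup_{z\in\A'}\|z\|_{\B}<\infty$; feeding these into the CUAG estimate for $\B$ transfers CUAG to $\A'$ with enlarged constant $C+M$, as required by \ref{enum:ISpS_item2}.

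Finally, \ref{enum:ISpS_item2} $\Rightarrow$ \ref{enum:ISpS_item1} is routine and does not even use $0$-invariance: in \eqref{CUAG_sum} the subadditivity $\beta(a+b,t)\leq\beta(2a,t)+\beta(2b,t)$ gives $\beta(\|x\|_{\A}+C,t)\leq\beta(2\|x\|_{\A},t)+\beta(2C,0)$, so setting $\tilde\beta(r,t):=\beta(2r,t)\in\KL$ and $c:=\beta(2C,0)$ recovers \eqref{isps_sum}; hence $\Sigma$ is ISpS with respect to $\A$, and ISpS by Definition~\ref{Def:ISpS}. I expect the only genuine obstacle to be the $0$-invariant-set construction in \ref{enum:ISpS_item3} $\Rightarrow$ \ref{enum:ISpS_item2}; everything else reduces to manipulations of $\KL$- and $\Kinf$-functions and to the distance inequalities above, once Proposition~\ref{prop:ULIM_plus_BRS_implies_UAG} is in hand.
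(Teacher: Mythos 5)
Your proof is correct, and it rests on the same technical engine as the paper, namely Proposition~\ref{prop:ULIM_plus_BRS_implies_UAG}; the differences are organizational, plus one genuinely different sub-argument. The paper proves \ref{enum:ISpS_item1}~$\Rightarrow$~\ref{enum:ISpS_item2} directly, by showing that the ISpS estimate yields CUAG w.r.t. $\overline{B_c(0)}$ via the identity $\|y\|_{\overline{B_c(0)}}=\max\{\|y\|-c,0\}$, and then invoking Proposition~\ref{prop:ULIM_plus_BRS_implies_UAG} to replace this ball by a $0$-invariant set; it takes \ref{enum:ISpS_item2}~$\Rightarrow$~\ref{enum:ISpS_item3} as immediate and closes \ref{enum:ISpS_item3}~$\Rightarrow$~\ref{enum:ISpS_item1} by Proposition~\ref{prop:ULIM_plus_BRS_implies_UAG} followed by Proposition~\ref{prop:CUAG_implies_ISpS}. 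You traverse the cycle in the opposite direction, which is equally valid, and your arc \ref{enum:ISpS_item1}~$\Rightarrow$~\ref{enum:ISpS_item3} (BRS from \eqref{isps_sum} with $\beta(\cdot,t)\le\beta(\cdot,0)$ and $\|x\|\le\|x\|_{\A}+\|\A\|$; ULIM w.r.t. $\overline{B_c(\A)}$ via the distance inequalities, which do hold in any normed space by moving from $y$ toward $a\in\A$ along the segment) is a sound, slightly weaker variant of the paper's first step. One factual correction, however: Proposition~\ref{prop:ULIM_plus_BRS_implies_UAG} is \emph{not} silent about invariance --- its conclusion explicitly asserts that $\A_{\eps,\gamma}$ is bounded, \emph{$0$-invariant}, and that $\Sigma$ is CUAG w.r.t. $\A_{\eps,\gamma}$ (the paper proves the invariance there by concatenating $u$ with the zero input, using the concatenation axiom). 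So your upgrade step in \ref{enum:ISpS_item3}~$\Rightarrow$~\ref{enum:ISpS_item2} is redundant; you could simply cite the proposition. That said, the redundant step is itself correct and has independent appeal: passing to the zero-input reachability set $\A':=\{\phi(s,y,0):y\in\B,\ s\ge 0\}$, your $0$-invariance argument uses only the cocycle property and shift invariance of the zero input (not the concatenation axiom), boundedness of $\A'$ follows from \eqref{CUAG_sum} with $u=0$ and $\|y\|_{\B}=0$, and the two-sided comparison $\|\cdot\|_{\A'}\le\|\cdot\|_{\B}\le\|\cdot\|_{\A'}+M$ with $M=\sup_{z\in\A'}\|z\|_{\B}\le\beta(C,0)$ correctly transfers CUAG with constant $C+M$. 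Your final arc \ref{enum:ISpS_item2}~$\Rightarrow$~\ref{enum:ISpS_item1} reproduces Proposition~\ref{prop:CUAG_implies_ISpS} verbatim. In short: same key lemma, reversed decomposition, and a correct alternative route to $0$-invariance that the paper obtains inside Proposition~\ref{prop:ULIM_plus_BRS_implies_UAG} instead.
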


{
Theorem~\ref{thm:RFC_weak_attr_imply_pUGAS} can be used in two ways. On the one hand, to prove ISpS of a system, we can merely check the conditions in item \ref{enum:ISpS_item3} of Theorem~\ref{thm:RFC_weak_attr_imply_pUGAS}.
On the other hand, if we know that a certain system is ISpS, item \ref{enum:ISpS_item2} shows that it enjoys also a CUAG property w.r.t. a certain bounded 0-invariant set.
}
%{
%Note that ISpS is characterized in Theorem~\ref{thm:RFC_weak_attr_imply_pUGAS}
 %by stability properties of a system w.r.t. a properly chosen bounded subset of a state space (which is different for different systems). This is the reason why we need to consider stability w.r.t. arbitrary sets and not merely w.r.t. a trivial equilibrium as it is done in \cite{MiW17b}.
%}

The proof will be divided into several lemmas which will be subdivided into two subsections.
We start with a characterization of the CUAG property.

\subsection{Characterization of CUAG property}
\label{sec:CUAG_characterization}

We start with a preliminary {result}.
%\begin{proposition}
%\label{prop:lemmas_from_MiW17b}
%Let $\Sigma$ be a control system. Then:
%\begin{enumerate}[label=(\roman*)]
    %\item\label{enum:ULIM_implies_UGB} if $\Sigma$ is {BRS and} ULIM w.r.t. certain bounded set $\A \subset X$, then $\Sigma$ is UGB.
    %\item\label{enum:UAG_and_UGB_imply_ISpS} if $\Sigma$ is UAG w.r.t. certain bounded set $\A \subset X$ and if $\Sigma$ is UGB, then $\Sigma$ is ISpS.
%\end{enumerate}
%\end{proposition}
%
%\begin{proof}
  %Item \ref{enum:ULIM_implies_UGB} was proved for $\A=\{0\}$ in \cite[Proposition 7]{MiW17b}. The proof for general bounded $\A$ is analogous.
    %
    %\ref{enum:UAG_and_UGB_imply_ISpS}. Fix arbitrary $r \in \R_+$. Practical UGS of $\Sigma$ implies existence of $\sigma, \gamma \in\Kinf$ and $c>0$ so that for all $ t\geq 0$, all $x \in B_r(\A)$ and all $u \in \Uc$ we have
%\[
%\|\phi(t,x,u)\|_{\A} \leq \sigma(r) + \gamma(\|u\|_{\Uc}) + c.
%\]
%Doing exactly the same steps as in the proof of \cite[Lemma 7]{MiW17b}, one can construct $\beta \in \KL$ so that
%\[
%\|\phi(t,x,u)\|_{\A} \leq \beta(\|x\|_{\A},t) + \gamma(\|u\|_{\Uc}) + c,
%\]
%which shows practical ISS of $\Sigma$.
%\end{proof}
%
\begin{proposition}
\label{prop:lemmas_from_MiW17b}
Let $\Sigma$ be a control system. If $\Sigma$ is {BRS and} ULIM w.r.t. certain bounded set $\A \subset X$, then $\Sigma$ is UGB.
\end{proposition}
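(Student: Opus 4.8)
The plan is to derive the uniform global bound \eqref{UGBAbschaetzung} with respect to the very same set $\A$. Fix the gain $\gamma\in\Kinf$ and the time function $\tau(\eps,r)$ furnished by ULIM, and write $M(C,T):=\sup\{\|\phi(t,x,u)\| : \|x\|\le C,\ \|u\|_\Uc\le C,\ t\in[0,T]\}$ for the reachability bound, which is finite by BRS and nondecreasing in $C$ and in $T$. I first record two harmless normalizations: $\tau(\eps,\cdot)$ may be taken nondecreasing, since a waiting time that works for all states with $\|x\|_\A\le r$ also works for all states with smaller $\A$-distance, so the minimal admissible time is automatically monotone in $r$; and I fix $\eps:=1$ throughout. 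Given $x\in X$, $u\in\Uc$, set $r:=\|x\|_\A$, $v:=\|u\|_\Uc$ and $R:=1+\gamma(v)$. The idea is to split $[0,\infty)$ at the first time the trajectory reaches $B_R(\A)$, bounding the initial finite-time arc by BRS and the tail by a last-entrance (ratchet) argument.

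For the tail I claim that once $\|\phi(s,x,u)\|_\A\le R$ at some time $s$, the trajectory never leaves the ball of radius $N(v):=M(\max\{R+\|\A\|,v\},\tau(1,R))+\|\A\|$ around $\A$ afterwards. To see this for an arbitrary $t>s$, take the \emph{last} instant $s'\in[s,t]$ with $\|\phi(s',x,u)\|_\A\le R$; this instant exists because $t\mapsto\|\phi(t,x,u)\|_\A$ is continuous by axiom \ref{axiom:Continuity}. Applying ULIM at the state $\phi(s',x,u)$ and the shifted input $u(s'+\cdot)$, whose norm is $\le v$ by shift invariance, yields re-entry into $B_R(\A)$ within time $\tau(1,R)$, so maximality of $s'$ forces $t-s'<\tau(1,R)$. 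The cocycle property \ref{axiom:Cocycle} then expresses $\phi(t,x,u)$ as a trajectory of length $<\tau(1,R)$ issuing from a state with $\|\cdot\|\le R+\|\A\|$ under an input of norm $\le v$, so BRS bounds it by $N(v)$, which depends on $v$ alone. In the same way ULIM at $x$ provides a first entrance time $t_1\le\tau(1,r)$, and BRS over $[0,t_1]$ controls the initial arc by $N_0:=M(\max\{r+\|\A\|,v\},\tau(1,r))+\|\A\|$.

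Combining the two regimes gives $\|\phi(t,x,u)\|_\A\le\max\{N_0,N(v)\}$ for all $t\ge0$, and it remains to cast this into the additively separated form of \eqref{UGBAbschaetzung}. Here the monotonicity of $\tau(1,\cdot)$ performs the decoupling of $N_0$: if $v\le r+\|\A\|$ then $N_0\le M(r+\|\A\|,\tau(1,r))+\|\A\|=:\psi(r)$, while if $v>r+\|\A\|$ then $r<v$, hence $\tau(1,r)\le\tau(1,v)$ and $N_0\le M(v,\tau(1,v))+\|\A\|=:\chi(v)$; thus $N_0\le\psi(r)+\chi(v)$. The functions $\psi$, $\chi$ and $N$ are each nondecreasing and finite at every point (since $\tau$ and $M$ are finite-valued), hence bounded on bounded intervals. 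Invoking the standard comparison-function fact that every nondecreasing, locally bounded $f:\R_+\to\R_+$ admits $\sigma\in\Kinf$ and $c\ge0$ with $f\le\sigma+c$, I pick $\sigma\in\Kinf$ dominating $\psi$ and a single $\tilde\gamma\in\Kinf$ dominating $\chi+N$, which delivers $\|\phi(t,x,u)\|_\A\le\sigma(\|x\|_\A)+\tilde\gamma(\|u\|_\Uc)+c$ for all $t\ge0$, that is, UGB.

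The main obstacle is precisely this last decoupling: the BRS bound entangles $\|x\|_\A$ and $\|u\|_\Uc$, and through $\tau$ the convergence time itself depends on $\|x\|_\A$, so extracting a clean sum $\sigma(\|x\|_\A)+\tilde\gamma(\|u\|_\Uc)+c$ is the delicate part. Two ingredients make it go through: the WLOG monotonicity of $\tau(\eps,\cdot)$, which separates the mixed argument of $M$, and the ratchet argument, which collapses the entire tail to a bound depending on $\|u\|_\Uc$ only. By contrast, the finite-time arc $[0,t_1]$ is comparatively routine once BRS is available.
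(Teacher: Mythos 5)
Your overall architecture — first entrance into $\overline{B_R(\A)}$ via ULIM, a last-entrance (ratchet) argument for the tail, BRS bounds on the finite arcs, a monotone normalization of $\tau(1,\cdot)$ to decouple the mixed BRS bound, and the $\Kinf$-majorization of nondecreasing locally bounded functions — is precisely the strategy of \cite[Proposition 7]{MiW17b}, which the paper's own one-line proof simply cites for $\A=\{0\}$ and declares analogous in general. However, the pivotal ratchet inference is invalid as written. From ULIM applied at $y=\phi(s',x,u)$ with the shifted input you conclude ``re-entry into $B_R(\A)$ within time $\tau(1,R)$, so maximality of $s'$ forces $t-s'<\tau(1,R)$.'' But ULIM only guarantees \emph{some} $t'\in[0,\tau(1,R)]$ with $\|\phi(t',y,u(s'+\cdot))\|_{\A}\le 1+\gamma(v)=R$, and since $\|y\|_{\A}\le R$ the trivial choice $t'=0$ always satisfies this estimate: it merely restates the known fact at the instant $s'$ itself and contradicts nothing. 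Hence ULIM, as stated, does not exclude that the trajectory touches $\overline{B_R(\A)}$ at $s'$ once and then stays outside for longer than $\tau(1,R)$, and your claimed excursion bound does not follow. (The same trivial-instant degeneracy is harmless in your first-entrance step, where $t_1=0$ is acceptable, but it is fatal here, where you need a return \emph{strictly later} than $s'$.)

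The gap is local and repairable by padding the radius. Suppose $t-s'>\tau(1,R+1)$. By maximality of $s'$ and continuity of $t\mapsto\|\phi(t,x,u)\|_{\A}$ (which gives $\|\phi(s'+\delta,x,u)\|_{\A}>R$ for $\delta\in(0,t-s']$ with limit $\le R$ as $\delta\downarrow 0$), for all sufficiently small $\delta>0$ the state $y_\delta:=\phi(s'+\delta,x,u)$ satisfies $\|y_\delta\|_{\A}\le R+1$. Applying ULIM at $y_\delta$ with radius $R+1$ yields an instant $\sigma\in\bigl[s'+\delta,\ s'+\delta+\tau(1,R+1)\bigr]\subset(s',t)$ with $\|\phi(\sigma,x,u)\|_{\A}\le 1+\gamma(v)=R$ — now a genuine contradiction with maximality, since $\sigma>s'$. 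Hence $t-s'\le\tau(1,R+1)$, and the remainder of your proof goes through verbatim with $\tau(1,R)$ replaced by $\tau(1,R+1)$ in $N(v)$, which is still a function of $v$ alone, using the monotonicity of $\tau(1,\cdot)$ you already arranged. A subsidiary point: your claim that ``the minimal admissible time is automatically monotone'' needs the infimum of admissible times to itself be admissible; this holds by continuity of the trajectory and the non-strict inequality in \eqref{eq:ULIM_ISS_section}, or one may simply take the infimum plus one. With these repairs your proof is correct and coincides in substance with the argument of the reference on which the paper relies.
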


\begin{proof}
This claim was proved for $\A=\{0\}$ in \cite[Proposition 7]{MiW17b}. The proof for general bounded $\A$ is analogous.
\end{proof}

{

Now we provide a simple restatement of the UGB property.
\begin{lemma}
\label{lem:UGB_characterizations}
A control system $\Sigma$ is UGB w.r.t. $\A\subset X$ iff there are $\sigma_1,\gamma\in\Kinf$ and $c>0$ so that
 \begin{equation}
    \label{eq:UGB_alternative_1}
    \| \phi(t,x,u) \|_{\A} \leq \sigma_1(\|x\|_{\A} + c) + \gamma(\|u\|_{\Uc}).
\end{equation}      
\end{lemma}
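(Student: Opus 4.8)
The plan is to prove the two implications of the equivalence separately. Both are elementary, and the whole content of the lemma is that the additive constant $c$ appearing outside the argument in the UGB estimate \eqref{UGBAbschaetzung} can be traded for an enlargement of the argument of the $\Kinf$-bound in \eqref{eq:UGB_alternative_1}, and conversely. Throughout I would keep the same set $\A$ and the same input gain $\gamma$, and modify only the state-dependent term.

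For the implication \eqref{eq:UGB_alternative_1} $\Rightarrow$ UGB, I would start from
\[
\| \phi(t,x,u) \|_{\A} \leq \sigma_1(\|x\|_{\A} + c) + \gamma(\|u\|_{\Uc})
\]
and apply the elementary inequality $\sigma_1(a+b) \leq \sigma_1(2a) + \sigma_1(2b)$, valid for any $\sigma_1 \in \Kinf$, with $a = \|x\|_{\A}$ and $b = c$. Setting $\sigma(r) := \sigma_1(2r) \in \Kinf$ and the constant $\tilde c := \sigma_1(2c) > 0$ then yields exactly the UGB estimate \eqref{UGBAbschaetzung} (with $\sigma$ and $\tilde c$ in place of the original data).

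For the converse UGB $\Rightarrow$ \eqref{eq:UGB_alternative_1}, the point is to absorb the constant $c$ into a $\Kinf$-function evaluated at an enlarged argument. The obstruction here is that the map $r \mapsto \sigma(r) + c$ is \emph{not} of class $\Kinf$, since it does not vanish at $r=0$, and so it cannot itself play the role of $\sigma_1$. The trick I would use is to add the identity: define $\sigma_1 := \sigma + \Id \in \Kinf$ and retain the same constant $c$. Since $\sigma$ is increasing and $r \geq 0$, one checks
\[
\sigma_1(r + c) = \sigma(r+c) + r + c \geq \sigma(r) + c \qquad \text{for all } r \geq 0,
\]
and applying this with $r = \|x\|_{\A}$ converts \eqref{UGBAbschaetzung} directly into \eqref{eq:UGB_alternative_1}, with $\gamma$ unchanged.

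I do not expect any genuine obstacle. The only slightly nonobvious step is the choice $\sigma_1 = \sigma + \Id$ in the second implication, which is forced precisely because $\sigma(r)+c$ fails to be a comparison function at the origin and must therefore be dominated by a genuine $\Kinf$-function evaluated at a shifted argument.
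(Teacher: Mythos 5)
Your proof is correct and matches the paper's own argument essentially step for step: for UGB $\Rightarrow$ \eqref{eq:UGB_alternative_1} the paper also takes $\sigma_1(r) := \sigma(r) + r$ and uses $\sigma(\|x\|_{\A}) + c \leq \sigma_1(\|x\|_{\A} + c)$, and for the converse it also applies $\sigma_1(a+b) \leq \sigma_1(2a) + \sigma_1(2b)$ to get $\sigma(r) := \sigma_1(2r)$ with the new constant $\sigma_1(2c)$. Nothing further is needed.
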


\begin{proof}
Let $\Sigma$ be UGB. Then there are $\sigma,\gamma\in\Kinf$ and $c>0$ so that for any $x\in X$, any $u\in\Uc$ and any $t\ge 0$ it holds that
\begin{eqnarray*}
 \| \phi(t,x,u) \|_{\A} &\leq& \sigma(\|x\|_{\A}) + \gamma(\|u\|_{\Uc}) + c\\
                                &\leq& \sigma(\|x\|_{\A} + c) + \|x\|_{\A}  + c + \gamma(\|u\|_{\Uc})\\
                                &=:& \sigma_1(\|x\|_{\A} + c) + \gamma(\|u\|_{\Uc})
\end{eqnarray*}
for $\sigma_1(r) := \sigma(r) + r$, $r\ge 0$.
Clearly, $\sigma_1\in\Kinf$ and hence \eqref{eq:UGB_alternative_1} holds.

Conversely, let \eqref{eq:UGB_alternative_1} hold.
Then there are $\sigma_1,\gamma\in\Kinf$ and $c>0$ s.t. for any $x\in X$, any $u\in\Uc$ and any $t\ge 0$ it holds that
 \begin{eqnarray*}
    \| \phi(t,x,u) \|_{\A} &\leq& \sigma_1(\|x\|_{\A}+c) + \gamma(\|u\|_{\Uc})\\
                                        &\leq& \sigma_1(2\|x\|_{\A}) + \gamma(\|u\|_{\Uc}) + \sigma_1(2c),
\end{eqnarray*}    
and thus $\Sigma$ is UGB.  
\end{proof}

The following proposition gives a useful characterization of CUAG.
\begin{proposition}
\label{prop:CUAG_is_UAG_plus_BRS}
Let $\A\subset X$ be a bounded set.
A control system $\Sigma$ is CUAG w.r.t. $\A$ $\Iff$ $\Sigma$ is BRS and UAG w.r.t. $\A$.
\end{proposition}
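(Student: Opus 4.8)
===
\textbf{Proof proposal.}

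The plan is to prove the two implications of the equivalence separately, exploiting Lemma~\ref{lem:UGB_characterizations} and Proposition~\ref{prop:lemmas_from_MiW17b} to connect the pieces.

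\emph{CUAG $\Rightarrow$ BRS and UAG.} Suppose $\Sigma$ is CUAG w.r.t.\ $\A$, so that \eqref{CUAG_sum} holds with some $\beta\in\KL$, $\gamma\in\Kinf$ and $C>0$. First I would extract UAG: for given $\eps,r>0$ and $x\in\overline{B_r(\A)}$ we have $\|x\|_{\A}\le r$, so $\beta(\|x\|_{\A}+C,t)\le\beta(r+C,t)$, and since $\beta(r+C,\cdot)\in\LL$ there is $\tau=\tau(\eps,r)$ with $\beta(r+C,\tau)\le\eps$; monotonicity of $\beta$ in its second argument then yields $\|\phi(t,x,u)\|_{\A}\le\eps+\gamma(\|u\|_{\Uc})$ for all $t\ge\tau$, which is exactly the UAG estimate \eqref{UAG_Absch}. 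To get BRS, note $\beta(\|x\|_{\A}+C,t)\le\beta(\|x\|_{\A}+C,0)$ gives a uniform-in-$t$ bound, so \eqref{CUAG_sum} forces $\|\phi(t,x,u)\|_{\A}\le\beta(\|x\|_{\A}+C,0)+\gamma(\|u\|_{\Uc})$; combining with $\|z\|\le\|z\|_{\A}+\|\A\|$ and $\|x\|_{\A}\le\|x\|+\|\A\|$ (valid since $\A$ is bounded) bounds $\|\phi(t,x,u)\|$ by a finite quantity depending only on $\|x\|$ and $\|u\|_{\Uc}$, uniformly over all $t\ge 0$ — in particular over any finite interval — which is the BRS property.

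\emph{BRS and UAG $\Rightarrow$ CUAG.} This is the substantive direction. Assume $\Sigma$ is BRS and UAG w.r.t.\ $\A$ with gain $\gamma\in\Kinf$. The first observation is that BRS together with UAG implies ULIM w.r.t.\ $\A$ (UAG trivially gives the existence-of-a-time conclusion of ULIM with the same $\tau(\eps,r)$), so by Proposition~\ref{prop:lemmas_from_MiW17b} the system is UGB, and then by Lemma~\ref{lem:UGB_characterizations} we obtain $\sigma_1\in\Kinf$ and $c>0$ with $\|\phi(t,x,u)\|_{\A}\le\sigma_1(\|x\|_{\A}+c)+\gamma(\|u\|_{\Uc})$ for all $t\ge 0$. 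The remaining task is to upgrade the uniform-in-time bound from UGB and the uniform decay from UAG into a single $\KL$-estimate of the form \eqref{CUAG_sum}. The idea is to set $r:=\|x\|_{\A}$ and use UAG to produce, for each $n\in\N$, a time $\tau(2^{-n},r)$ after which $\|\phi(t,x,u)\|_{\A}\le 2^{-n}+\gamma(\|u\|_{\Uc})$; interpolating these levels against the UGB bound on $[0,\tau(\cdot,r)]$ yields a decaying envelope. I would invoke the standard comparison-function construction (as in Sontag–Wang) that assembles a family of such time-thresholds, monotone in $r$, into a function $\beta\in\KL$ dominating the residual $\|\phi(t,x,u)\|_{\A}-\gamma(\|u\|_{\Uc})$ and bounded in terms of $\|x\|_{\A}+C$ for a suitable constant $C$ absorbing $c$.

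The main obstacle is the last step: manufacturing a genuine $\KL$-function with the correct dependence $\beta(\|x\|_{\A}+C,t)$ rather than a mere pointwise decay. The subtlety is that UAG's time $\tau(\eps,r)$ depends on both $\eps$ and the radius $r=\|x\|_{\A}$, and one must glue the per-level decay estimates together uniformly over the state norm while keeping the bound additive in the form $\|x\|_{\A}+C$; this requires care to ensure the resulting $\beta$ is continuous, increasing in its first argument, and $\LL$ in its second, which is precisely where the UGB bound is needed to control the early-time behavior and where the constant $C$ enters to absorb $c$.
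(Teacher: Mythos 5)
Your proposal is correct and follows essentially the same route as the paper: the forward direction extracts UAG by solving $\beta(r+C,\tau)=\eps$ (with BRS read off from the uniform-in-$t$ bound), and the backward direction goes through ULIM, Proposition~\ref{prop:lemmas_from_MiW17b} and Lemma~\ref{lem:UGB_characterizations} to get the UGB estimate, then assembles the UAG time-thresholds $\tau(\eps_n,r)$ into a $\KL$-envelope via the standard Sontag--Wang-type construction, absorbing $c$ into the argument $\|x\|_{\A}+C$ exactly as the paper does (the paper's only cosmetic difference is taking levels $\eps_n=2^{-n}\sigma(r+c)$ scaled by the UGB bound rather than absolute levels $2^{-n}$, and deferring the majorization to \cite[Lemma~7]{MiW17b}).
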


\begin{proof}
"$\Rightarrow$". Let $\Sigma$ be a CUAG control system. Then $\Sigma$ is BRS. Now for every $\eps>0$ and any $r>0$ pick $\tau(\eps,r)$ as a solution of the equation $\beta(C+r,\tau)=\eps$ (if it exists), or $\tau(\eps,r):=\infty$, if the equation has no solution.
This shows UAG of $\Sigma$ w.r.t. $\A$.

"$\Leftarrow$". Let a control system $\Sigma$ be UAG w.r.t. $\A$ and BRS. According to Proposition~\ref{prop:lemmas_from_MiW17b},
the system $\Sigma$ is UGB and in view of Lemma~\ref{lem:UGB_characterizations}
there are $\sigma_1,\gamma\in\Kinf$ and $c>0$ so that \eqref{eq:UGB_alternative_1} holds
for all $x\in X$, $t\geq0$ and $u\in\Uc$.

Without loss of generality, we may assume that $\gamma$ in \eqref{UAG_Absch} is the same as in \eqref{eq:UGB_alternative_1} (otherwise pick $\gamma$ as a maximum of both of them).

Fix arbitrary $r \in \R_+$ and define $\eps_n:= 2^{-n}  \sigma(r + c)$, for $n \in \N$. The UAG property implies that there exists a sequence of times
$\tau_n:=\tau(\eps_n,r)$, $n\in\N$ which we may without loss of generality assume
to be strictly increasing, such that for all $x \in \overline{B_r(\A)}$ and all $u \in \Uc$
\[
\|\phi(t,x,u)\|_{\A} \leq \eps_n + \gamma(\|u\|_{\Uc})\quad  \forall t \geq \tau_n.
\]
From Proposition~\ref{lem:UGB_characterizations} we see that the previous inequality is valid also for $n=0$, if we set $\tau_0 := 0$.
Define $\tilde\omega(r,\tau_n):=\eps_{n-1} = 2^{-n +1} \sigma(r + c)$, for $n \in \N$, $n \neq 0$ and $\tilde\omega(r,0):=2\eps_0=2\sigma(r+c)$. 
Here we assume $r\in[-c,+\infty)$.

Now extend the definition of $\tilde\omega$ to a function $\tilde\omega(r,\cdot) \in \LL$, $r\in[-c,+\infty)$ (see \cite[Lemma~7]{MiW17b} for details) so that
\begin{eqnarray}
\|\phi(t,x,u)\|_{\A} \leq \tilde\omega(\|x\|_{\A},t) + \gamma(\|u\|_{\Uc}).
\label{eq:tmp_CUAG_estimate}
\end{eqnarray}
Set $\omega(r,t):=\tilde\omega(r-c,t)$, $r\geq 0$, $t\geq0$. 

As in the proof of \cite[Lemma~7]{MiW17b}, there is $\beta\in\KL$ so that $\omega(r,t)\leq \beta(r,t)$ for all $r,t\ge 0$.
From \eqref{eq:tmp_CUAG_estimate} we obtain
\begin{eqnarray*}
\|\phi(t,x,u)\|_{\A} \leq \beta(\|x\|_{\A} + c,t) + \gamma(\|u\|_{\Uc}),
%\label{eq:tmp_CUAG_estimate}
\end{eqnarray*}
which shows CUAG of $\Sigma$.
\end{proof}

{

\begin{proposition}
\label{prop:CUAG_implies_ISpS}
If there is a bounded $\A \subset X$ so that a control system $\Sigma = (X,\Uc,\phi)$ is CUAG w.r.t. $\A$, then $\Sigma$ is ISpS.
\end{proposition}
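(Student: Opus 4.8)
The plan is to transform the CUAG estimate directly into the ISpS estimate of Definition~\ref{Def:ISpS_wrt_set}, keeping the \emph{same} bounded set $\A$. By assumption there are $\beta\in\KL$, $\gamma\in\Kinf$ and $C>0$ such that
\[
\|\phi(t,x,u)\|_{\A} \leq \beta(\|x\|_{\A}+C,t) + \gamma(\|u\|_{\Uc})
\]
holds for all $x\in X$, $u\in\Uc$ and $t\geq0$. The only mismatch with \eqref{isps_sum} is the shift by $C$ inside the $\KL$-term, which I would remove at the cost of an additive constant.

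First I would split the $\KL$-term using the elementary inequality $\beta(a+b,t)\leq\beta(2a,t)+\beta(2b,t)$, valid for all $a,b,t\geq0$ because $\beta(\cdot,t)$ is nondecreasing and $a+b\leq2\max\{a,b\}$ (this is the same device already used in the proof of Proposition~\ref{prop:Invariance_of_ISpS_wrt_bounded_sets}). With $a:=\|x\|_{\A}$ and $b:=C$ this yields
\[
\beta(\|x\|_{\A}+C,t) \leq \beta(2\|x\|_{\A},t) + \beta(2C,t).
\]
Next I would bound the second summand independently of $t$: since $\beta(2C,\cdot)\in\LL$ is decreasing, $\beta(2C,t)\leq\beta(2C,0)=:c$ for every $t\geq0$, and $c>0$ because $\beta(\cdot,0)\in\K$ is strictly increasing with $\beta(0,0)=0$ and $C>0$.

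Setting $\tilde\beta(r,t):=\beta(2r,t)$, which again lies in $\KL$, I would then combine the two bounds to obtain
\[
\|\phi(t,x,u)\|_{\A} \leq \tilde\beta(\|x\|_{\A},t) + \gamma(\|u\|_{\Uc}) + c
\]
for all $x\in X$, $u\in\Uc$ and $t\geq0$, with $c>0$. This is exactly the estimate \eqref{isps_sum}, so $\Sigma$ is ISpS w.r.t. the bounded set $\A$, and therefore ISpS in the sense of Definition~\ref{Def:ISpS}. I do not expect any genuine obstacle here: the statement is essentially a bookkeeping lemma on comparison functions, and the only point requiring a moment's care is verifying that the absorbed constant is \emph{strictly} positive, since the definition of ISpS demands $c>0$ rather than merely $c\geq0$.
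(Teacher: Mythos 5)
Your proof is correct and follows essentially the same route as the paper's: the paper likewise applies $\beta(a+b,t)\leq\beta(2a,t)+\beta(2b,t)$ to split off the constant $C$, then bounds $\beta(2C,t)\leq\beta(2C,0)$ to obtain the additive constant in \eqref{isps_sum}. Your additional remark that the absorbed constant $\beta(2C,0)$ is strictly positive is a fair point of care, and your argument handles it correctly.
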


\begin{proof}
Let $\Sigma=(X,\Uc,\phi)$ be a control system which is CUAG w.r.t. certain bounded $\A\subset X$. Then there are $\beta\in\KL$ and $\gamma\in\Kinf$ so that for all $x\in X,\ u\in\Uc,\ t\geq 0$ we have
\begin {eqnarray*}
\| \phi(t,x,u) \|_{\A} &\leq& \beta(\| x \|_{\A} + C,t) + \gamma( \|u\|_{\Uc})\\
&\leq& \beta(2\| x \|_{\A},t) + \beta(2C,t) + \gamma( \|u\|_{\Uc})\\
&\leq& \beta(2\| x \|_{\A},t) + \beta(2C,0) + \gamma( \|u\|_{\Uc}),
\end{eqnarray*}
which shows ISpS of $\Sigma$.
\end{proof}
}

}

{

\subsection{Improving stability properties via enlarging of the attracting set}
\label{sec:Stab_prop_improvement}

Assume that a control system $\Sigma$ is ULIM w.r.t a certain set $\A\subset X$. Then it is clear that $\Sigma$ is ULIM w.r.t. any $\B\supset \A$.
However, it may exist certain subsets of $X$ w.r.t. which $\Sigma$ has better properties than merely ULIM.
In this section we show that this is indeed the case provided $\Sigma$ is BRS.
We exploit a simple lemma:
\begin{lemma}
\label{lem:Averages_of_increasing_functions} 
Let $f:\R_+\to\R$ be strictly increasing. Then $g:t\mapsto \frac{1}{t}\int_0^t f(s)ds$ is again strictly increasing.
\end{lemma}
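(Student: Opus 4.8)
The plan is to avoid derivatives entirely, since a strictly increasing $f$ need not be continuous (let alone differentiable), and instead to argue directly from the reading of $g(t)$ as the average of $f$ over $[0,t]$. First I would record that a monotone function is Riemann integrable on every bounded interval (it has at most countably many discontinuities), so $g(t)=\frac1t\int_0^t f(s)\,ds$ is well-defined for every $t>0$; I will establish strict monotonicity of $g$ on $(0,\infty)$. The underlying intuition is that, as $t$ grows, the newly added values $f(r)$ all exceed the current average, so the average must strictly increase.

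To make this precise, I would fix $0<s<t$ and abbreviate $A:=g(s)=\frac1s\int_0^s f(\sigma)\,d\sigma$, so that $\int_0^s f(\sigma)\,d\sigma = sA$. The key pointwise estimate is that $f(r)>A$ for every $r\in(s,t)$: indeed, for any $\sigma\in[0,s)$ we have $\sigma<s<r$, so strict monotonicity gives $f(r)>f(\sigma)$; integrating this strict inequality over $\sigma\in[0,s]$ yields $s\,f(r) > \int_0^s f(\sigma)\,d\sigma = sA$, i.e. $f(r)>A$. Since this holds on the whole interval $(s,t)$ of positive length, the strict inequality survives integration and $\int_s^t f(r)\,dr > (t-s)A$.

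Splitting the integral and combining the two estimates then gives $\int_0^t f = sA + \int_s^t f > sA + (t-s)A = tA$, hence $g(t)=\frac1t\int_0^t f > A = g(s)$, which is exactly $g(s)<g(t)$ and completes the argument. I do not expect a serious obstacle here; the only point requiring care is precisely to resist computing $g'$ and to use the averaging comparison instead, and to check that each strict inequality is asserted on an interval of positive measure so that it is preserved under integration — which is the case throughout.
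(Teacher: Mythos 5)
Your proof is correct. There is nothing in the paper to compare it against: the lemma is introduced in Section~\ref{sec:Stab_prop_improvement} as ``a simple lemma'' and stated without any proof, so your write-up actually supplies the justification the paper leaves implicit. Your route is the right one for this statement --- a strictly increasing $f$ need not be differentiable or even continuous, so computing $g'$ is indeed the trap to avoid, and the averaging comparison ($f$ exceeds its past average, so appending new values raises the average) is the natural elementary argument. The only step a referee might ask you to expand is the claim that pointwise strict inequality ``survives integration.'' Your appeal to positive measure is fine if you invoke the Lebesgue integral, but monotonicity gives you a one-line elementary version: for $r\in(s,t)$ and all $\sigma\in[0,s]$ one has $f(r)-f(\sigma)\geq f(r)-f(s)>0$, so $sf(r)-\int_0^s f\geq s\bigl(f(r)-f(s)\bigr)>0$; and with $m:=(s+t)/2$, $\int_s^t\bigl(f(r)-A\bigr)dr\geq (t-m)\bigl(f(m)-A\bigr)>0$. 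As an aside, the substitution $\sigma=\lambda t$ gives $g(t)=\int_0^1 f(\lambda t)\,d\lambda$, from which $g(t)-g(s)=\int_0^1\bigl(f(\lambda t)-f(\lambda s)\bigr)d\lambda>0$ follows by the same strictness observation --- a slightly shorter packaging of the identical idea, with no real advantage over your decomposition.
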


Assume that $\Sigma=(X,\Uc,\phi)$ is given. For any $\A\subset X$, any $\eps>0$ and any $\gamma\in\Kinf$ define
\begin{eqnarray}
\A_{\eps,\gamma}:=\{\phi(t,x,u):t\in\R_+,\ x\in B_{\eps}(\A),\ \|u\|_{\Uc}\leq\gamma^{-1}(\tfrac{\eps}{2})\}.
\label{eq:Aeps_def}
\end{eqnarray}
Note that from the identity axiom ($\Sigma$\ref{axiom:Identity}), for each $\eps>0$ and any $\gamma\in\Kinf$ it holds that 
$\A\subset B_\eps(\A) \subset \A_{\eps,\gamma}$.
The construction of the sets $\A_{\eps,\gamma}$ is motivated by the notion of the positive prolongation of a set, see \cite{BhS02}.

Next we show the central technical result in this paper. It may be viewed as a strengthening of \cite[Lemma VI.2]{SoW96}.
}

%\begin{lemma}
%\label{lem:zero_Invariance_Aeps}
%For any $\eps>0$ the set $\A_\eps$ is 0-invariant.
%\end{lemma}
%
%\begin{proof}
%
%\end{proof}

\begin{proposition}
\label{prop:ULIM_plus_BRS_implies_UAG}
Assume that $\Sigma$ is a BRS control system and $\Sigma$ has the ULIM property w.r.t. a bounded (not necessarily 0-invariant) set $\A \subset X$, with $\gamma\in\Kinf$ as in \eqref{eq:ULIM_ISS_section}. 
Then for any $\eps>0$ the set $\A_{\eps,\gamma}$ is bounded, 0-invariant and $\Sigma$ is CUAG w.r.t. $\A_{\eps,\gamma}$.
\end{proposition}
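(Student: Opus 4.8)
The plan is to establish the three assertions separately, using throughout that by Proposition~\ref{prop:lemmas_from_MiW17b} the standing hypotheses (BRS together with ULIM w.r.t. $\A$) force $\Sigma$ to be UGB, say with $\sigma,\gamma_U\in\Kinf$ and $c>0$ as in \eqref{UGBAbschaetzung}. Fix $\eps>0$. For \emph{boundedness} I would observe that every element of $\A_{\eps,\gamma}$ has the form $\phi(t,x,u)$ with $\|x\|_{\A}<\eps$ and $\|u\|_{\Uc}\le\gamma^{-1}(\tfrac{\eps}{2})$; although $t$ ranges over all of $\R_+$ (so that BRS alone is insufficient), the UGB estimate bounds $\|\phi(t,x,u)\|_{\A}$ by $M:=\sigma(\eps)+\gamma_U(\gamma^{-1}(\tfrac{\eps}{2}))+c$ \emph{independently of $t$}. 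Hence $\A_{\eps,\gamma}\subset\overline{B_M(\A)}$ is bounded.

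The engine for the remaining two claims is a single \emph{absorption observation}: if $\phi(t_0,x,u)\in B_{\eps}(\A)$ and $\|u\|_{\Uc}\le\gamma^{-1}(\tfrac{\eps}{2})$, then $\phi(t,x,u)\in\A_{\eps,\gamma}$ for every $t\ge t_0$. This follows from the cocycle property ($\Sigma$\ref{axiom:Cocycle}), since $\phi(t,x,u)=\phi(t-t_0,\phi(t_0,x,u),u(t_0+\cdot))$ and the shifted input satisfies $\|u(t_0+\cdot)\|_{\Uc}\le\|u\|_{\Uc}\le\gamma^{-1}(\tfrac{\eps}{2})$ by shift invariance, so the right-hand side lies in $\A_{\eps,\gamma}$ directly by its definition \eqref{eq:Aeps_def}. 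For \emph{$0$-invariance} I take $y=\phi(s,x,u)\in\A_{\eps,\gamma}$, concatenate $u$ on $[0,s]$ with the zero input to form $\tilde u\in\Uc$ (whose norm is $\le\|u\|_{\Uc}$ by the concatenation axiom), and use causality together with the cocycle property to rewrite $\phi(t,y,0)=\phi(s+t,x,\tilde u)$, which is again of the generating form and hence lies in $\A_{\eps,\gamma}$.

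For \emph{CUAG} I would invoke Proposition~\ref{prop:CUAG_is_UAG_plus_BRS}: since $\A_{\eps,\gamma}$ is bounded and $\Sigma$ is BRS, it suffices to prove UAG w.r.t. $\A_{\eps,\gamma}$. Fix $\eps',r>0$; using $\A\subset\A_{\eps,\gamma}\subset\overline{B_M(\A)}$ one checks that $\|x\|_{\A_{\eps,\gamma}}\le r$ implies $\|x\|_{\A}\le r+M=:r'$. I then apply ULIM w.r.t. $\A$ (with the same $\gamma$) for the parameters $\eps/4$ and $r'$ to obtain $\tau:=\tau(\eps/4,r')$ such that for every such $x$ and every $u$ there is $t_0\le\tau$ with $\|\phi(t_0,x,u)\|_{\A}\le\eps/4+\gamma(\|u\|_{\Uc})$, and split on the size of the input. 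If $\gamma(\|u\|_{\Uc})\le\eps/4$, then $\|\phi(t_0,x,u)\|_{\A}<\eps$ and $\|u\|_{\Uc}\le\gamma^{-1}(\tfrac{\eps}{2})$, so the absorption observation gives $\phi(t,x,u)\in\A_{\eps,\gamma}$, i.e. $\|\phi(t,x,u)\|_{\A_{\eps,\gamma}}=0$, for all $t\ge\tau\ge t_0$.

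The hard part will be the complementary case $\gamma(\|u\|_{\Uc})>\eps/4$, where the trajectory need not enter $B_\eps(\A)$ and must instead be controlled by the input gain. Here the input is bounded away from zero, and I would use UGB to propagate the estimate past $t_0$: writing $\phi(t,x,u)=\phi(t-t_0,y,u(t_0+\cdot))$ with $\|y\|_{\A}\le\eps/4+\gamma(\|u\|_{\Uc})<2\gamma(\|u\|_{\Uc})$ yields, for all $t\ge t_0$, the bound $\|\phi(t,x,u)\|_{\A_{\eps,\gamma}}\le\|\phi(t,x,u)\|_{\A}\le\sigma(2\gamma(\|u\|_{\Uc}))+\gamma_U(\|u\|_{\Uc})+c$. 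The delicate point is converting this additive constant $c$ into a genuine $\Kinf$ gain: since on this branch $\|u\|_{\Uc}>u_0:=\gamma^{-1}(\tfrac{\eps}{4})>0$, I set $\gamma'(s):=\sigma(2\gamma(s))+\gamma_U(s)+\tfrac{c}{u_0}s\in\Kinf$, which dominates the right-hand side whenever $\|u\|_{\Uc}>u_0$ and which depends only on $\eps$ and the system data (as required for a single UAG gain). Combining both cases gives $\|\phi(t,x,u)\|_{\A_{\eps,\gamma}}\le\eps'+\gamma'(\|u\|_{\Uc})$ for all $t\ge\tau$ and all $u$, establishing UAG and hence, via Proposition~\ref{prop:CUAG_is_UAG_plus_BRS}, CUAG w.r.t. $\A_{\eps,\gamma}$.
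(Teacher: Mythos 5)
Your proposal is correct, but in the two technically substantial parts it takes a genuinely lighter route than the paper. For \emph{boundedness}, the paper does not invoke UGB at this stage: it first regularizes the ULIM time into a continuous, strictly monotone $\tau(\eps,R)$ via a double-averaging construction (using Lemma~\ref{lem:Averages_of_increasing_functions}), shows that trajectories starting in $B_{\eps}(\A)$ under inputs with $\|u\|_{\Uc}\leq\gamma^{-1}(\tfrac{\eps}{2})$ return to $B_{\eps}(\A)$ within time $\tau(\eps,\eps)$, rewrites $\A_{\eps,\gamma}$ as a reachability set over the finite horizon $[0,\tau(\eps,\eps)]$, and only then applies BRS; your one-line appeal to Proposition~\ref{prop:lemmas_from_MiW17b} (UGB gives a bound uniform in $t$) makes that entire regularization unnecessary for this proposition. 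For \emph{UAG}, the paper handles the large-input branch by absorbing the trajectory into the enlarged set $\A_{2\gamma(\|u\|_{\Uc}),\gamma}$ and then majorizing the nested-set distance function $f_\eps(s)=\sup_{x\in\A_{s,\gamma}}\|x\|_{\A_{\eps,\gamma}}$ of \eqref{eq:f_eps_def} by some $\sigma_\eps\in\Kinf$ (possible since $f_\eps$ is nondecreasing, finite by the boundedness part, and vanishes on $[0,\eps]$); your alternative --- bounding $\|\phi(t,x,u)\|_{\A}$ directly by UGB from the hitting time onwards and converting the additive constant $c$ into a gain through the linear term $\tfrac{c}{u_0}s$, legitimate because $\|u\|_{\Uc}>u_0=\gamma^{-1}(\tfrac{\eps}{4})$ on that branch --- is more elementary, and your conversion constant $M$ also replaces the paper's somewhat awkward constant $C$ relating $\|\cdot\|_{\A}$ and $\|\cdot\|_{\A_{\eps,\gamma}}$ more cleanly. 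The absorption observation, the $0$-invariance argument via concatenation with the zero input (where, like the paper, you correctly need both causality and the extra axiom of Remark~\ref{rem:Additional_Restriction}), and the final reduction via Proposition~\ref{prop:CUAG_is_UAG_plus_BRS} coincide with the paper's; note that both proofs in fact deliver the stronger offset-free estimate $\|\phi(t,x,u)\|_{\A_{\eps,\gamma}}\leq\gamma'(\|u\|_{\Uc})$ for $t\geq\tau$. One small point you should make explicit: UGB in Definition~\ref{def:ULIM_UAG_etc} is stated w.r.t.\ \emph{some} bounded set, so to use the estimate with $\|\cdot\|_{\A}$ you need the standard conversion between distances to two bounded sets (as in the proof of Proposition~\ref{prop:Invariance_of_ISpS_wrt_bounded_sets}), which only modifies $\sigma$ and $c$ --- the paper implicitly does the same. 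The price of your shortcut is losing the structure the paper builds along the way (the continuous monotone $\tau$ and the monotone family $s\mapsto\A_{s,\gamma}$ with $f_\eps$), which the paper reuses, e.g.\ in Remark~\ref{rem:Minimal_UAG_sets}.
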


\begin{proof}
We divide the proof into four parts.

\textbf{Boundedness of $\A_{\eps,\gamma}$}. Pick any $\eps>0$, any $R>0$ and fix them. Since $\Sigma$ is ULIM w.r.t. $\A$, there is a $\tilde\tau=\tilde\tau(\eps, R)>0$ so that for any $u\in\Uc$
\begin{eqnarray}
\|x\|_{\A} \leq R \srsd \exists \bar{t}\leq \tilde\tau:\ \|\phi(\bar{t},x,u)\|_{\A} \leq \frac{\eps}{2} +\gamma(\|u\|_{\Uc}).
%x\in B_R(\A), u\in\Uc \srsd \exists \bar{t}\leq \tilde\tau:\ \|\phi(\bar{t},x,u)\|_{\A} \leq \eps +\gamma(\|u\|_{\Uc}).
\label{eq:ULIM_UAG_Section0}
\end{eqnarray}
Without loss of generality we can assume that $\tilde\tau$ is decreasing w.r.t. $\eps$ and increasing w.r.t. $R$.
Then $\tilde\tau$ is integrable.
Define
\[
\tau(\eps,R):=\frac{2}{\eps R}\int_R^{2R}\int_{\eps/2}^{\eps} \tilde\tau(\eps_1,R_1)d\eps_1dR_1.
\]
{
Since $\tilde\tau$ is strictly increasing w.r.t. the second argument and strictly decreasing w.r.t. the first one, for any $\eps,R>0$ we have that 
$$\min\{\tilde\tau(\eps_1,R_1):\ \eps_1\in[\eps/2,\eps],\ R_1\in[R,2R] \} = \tilde\tau(\eps,R)$$
and thus $\tau(\eps,R)\geq\tilde\tau(\eps,R)$.

By standard lemmas from analysis, $\tau$ is continuous.

Let us show that $\tau$ is increasing w.r.t. $R$. Pick any $R_1,R_2>0$ so that $R_2>R_1$. We have for $i=1,2$:
\[
\tau(\eps,R_i) = \frac{2}{\eps}\int_{\eps/2}^{\eps} \frac{1}{R_i}\int_{R_i}^{2R_i} \tilde\tau(\eps_1,R) dR d\eps_1.
\]
Since $\tilde\tau(\eps_1,\cdot)$ is strictly increasing, and since $R_2 >R_1$, we have:
\[
\frac{1}{R_2}\int_{R_2}^{2R_2} \tilde\tau(\eps_1,R) dR > \frac{1}{R_2}\int_{R_1}^{R_1+R_2} \tilde\tau(\eps_1,R) dR
\]
Again since $\tilde\tau(\eps_1,\cdot)$ is strictly increasing and applying Lemma~\ref{lem:Averages_of_increasing_functions} we see that
\[
\frac{1}{R_2}\int_{R_1}^{R_1+R_2} \tilde\tau(\eps_1,R) dR > \frac{1}{R_1}\int_{R_1}^{2R_1} \tilde\tau(\eps_1,R) dR.
\]
This shows that $\tau(\eps,R_2)>\tau(\eps,R_1)$, and thus $\tau$ is increasing w.r.t. the second argument.

Analogously, $\tau$ is decreasing w.r.t. $\eps$. 
}
Consequently, for any $u\in\Uc$
\begin{eqnarray}
\|x\|_{\A} \leq R \srsd \exists \bar{t}\leq \tau(\eps,R):\ \|\phi(\bar{t},x,u)\|_{\A} \leq \frac{\eps}{2} +\gamma(\|u\|_{\Uc})
\label{eq:ULIM_UAG_Section1}
\end{eqnarray}
and in particular
\begin{equation*} 
%\label{eq:ULIM_UAG_Section2}
\begin{split}
&\|x\|_{\A} \leq \frac{\eps}{2} +  \gamma(\|u\|_{\Uc}) \\
& \ \Rightarrow \ \exists \tilde{t} {\in} (0, \tau(\eps, \frac{\eps}{2} {+}\gamma(\|u\|_{\Uc}))):\ \|\phi(\tilde{t},x,u)\|_{\A} \leq \frac{\eps}{2} {+}\gamma(\|u\|_{\Uc}).
\end{split}
\end{equation*}
This means, that trajectories corresponding to the input $u$, emanating from $B_{\frac{\eps}{2} +\gamma(\|u\|_{\Uc})}(\A)$ return to this ball in time not larger than $\tau(\eps, \frac{\eps}{2} +\gamma(\|u\|_{\Uc}))$.
In particular, for any $\eps>0$, any $x\in B_{\eps}(\A)$ and any $u \in \Uc$: $\|u\|_{\Uc} \leq \gamma^{-1}(\tfrac{\eps}{2})$ there is $\bar{t}\leq \tau(\eps,\eps)$
so that $\phi(\bar{t},x,u) \in B_{\eps}(\A)$.

For any $t>0$ due to the cocycle property it holds that
\[
\phi(t+\bar{t},x,u)= \phi(t,\phi(\bar{t},x,u),u(\bar{t} + \cdot)).
\]
The axiom of shift invariance tells us that $\|u(\bar{t} + \cdot)\|_{\Uc}\leq \|u\|_{\Uc} \leq \gamma^{-1}(\tfrac{\eps}{2})$.
Hence $\phi(t+\bar{t},x,u) \in \A_{\eps,\gamma}$ for all $t\geq0$ by definition of $\A_{\eps,\gamma}$ and for our system $\Sigma$ the set $\A_{\eps,\gamma}$ can be represented as
\[
\A_{\eps,\gamma} = \{\phi(t,x,u): t\in[0,\tau(\eps,\eps)],\ x\in B_{\eps}(\A),\ \|u\|_{\Uc}\leq\gamma^{-1}(\tfrac{\eps}{2})\}.
\]
Since $\Sigma$ is BRS, $\A_{\eps,\gamma}$ is bounded for any $\eps>0$.

\textbf{0-invariance of $\A_{\eps,\gamma}$}.
Let $y\in\A_{\eps,\gamma}$. Then there are certain $s\geq0$, $x \in B_\eps(\A)$ and $u\in \Uc$: $\|u\|_{\Uc} \leq \gamma^{-1}(\tfrac{\eps}{2})$ so that $y=\phi(s,x,u)$.
Due to cocycle property it holds for any $t\geq0$ that 
\begin{eqnarray*}
\phi(t,y,0) = \phi(t+s,x,w),\quad
w(\tau) := 
\begin{cases}
u(\tau), & \text{ if } \tau \in [0,s], \\ 
0,  & \text{ otherwise}.
\end{cases}
\end{eqnarray*}
By the axiom of concatenation we have $w\in\Uc$ $\|w\|_\Uc \leq \|u\|_\Uc \leq \gamma^{-1}(\tfrac{\eps}{2})$ and hence $\phi(t,y,0) \in \A_{\eps,\gamma}$.

\textbf{$\Sigma$ is UAG w.r.t. $\A_{\eps,\gamma}$}.
Let us fix $\eps>0$ and $R>0$ and revisit the implication \eqref{eq:ULIM_UAG_Section1}, which implies that
\begin{eqnarray}
\|x\|_{\A} \leq R,\; \|u\|_{\Uc}\leq  \gamma^{-1}(\tfrac{\eps}{2}) \srsd \exists \bar{t}\leq \tau:\; \|\phi(\bar{t},x,u)\|_{\A} \leq \eps
\label{eq:ULIM_UAG_Section3}
\end{eqnarray}
and
\begin{equation}
\begin{split}
\|x\|_{\A} \leq R,&\ \|u\|_{\Uc}\geq  \gamma^{-1}(\tfrac{\eps}{2}) \\
&\srs  \exists \bar{t}\leq \tau:\ \|\phi(\bar{t},x,u)\|_{\A} \leq 2\gamma(\|u\|_{\Uc}).
\end{split}
\label{eq:ULIM_UAG_Section4}
\end{equation}
{
Taking \eqref{eq:ULIM_UAG_Section3} and \eqref{eq:ULIM_UAG_Section4} and using the reasoning exploited to show boundedness of $\A_{\eps,\gamma}$ we obtain
\begin{eqnarray}
\|x\|_{\A} \leq R,\; \|u\|_{\Uc}\leq  \gamma^{-1}(\tfrac{\eps}{2}),\; t\geq\tau \srsd \phi(t,x,u)\in \A_{\eps,\gamma}
\label{eq:ULIM_UAG_Section3_a}
\end{eqnarray}
and
\begin{equation}
\begin{split}
\|x\|_{\A} \leq R,&\ \|u\|_{\Uc}\geq  \gamma^{-1}(\tfrac{\eps}{2}),\; t\geq\tau \srsd \phi(t,x,u)\in \A_{2\gamma(\|u\|_{\Uc}),\gamma}
\end{split}
\label{eq:ULIM_UAG_Section4_a}
\end{equation}
Thus, for all $u\in\Uc$ we have
\begin{eqnarray}
\|x\|_{\A} \leq R,\ t \geq \tau(R,\eps)  \srsd \phi(t,x,u) \in \A_{\eps,\gamma} \cup  \A_{2\gamma(\|u\|_{\Uc}),\gamma}.
\label{eq:ULIM_UAG_Section5}
\end{eqnarray}
}
As $\A_{k,\gamma}$ is bounded for any $k>0$, the  following function is well-defined:
\begin{eqnarray}
f_\eps: s \mapsto \sup_{x\in \A_{s,\gamma}}\|x\|_{\A_{\eps,\gamma}}, \quad s \geq 0.
\label{eq:f_eps_def}
\end{eqnarray}
Since $\A_{k_1,\gamma} \subset \A_{k_2,\gamma}$ for $k_1<k_2$, $f_{\eps}$ is nondecreasing and $f_{\eps}(s)=0$ for $s\in[0,\eps]$. Hence there exists $\sigma_\eps \in\Kinf$:
$f_\eps(s) \leq \sigma_\eps (s)$ for all $s\geq0$.

With this notation we can reformulate \eqref{eq:ULIM_UAG_Section5} into
\begin{equation}
\begin{split}
\|x\|_{\A} \leq R,\ &u\in\Uc,\ t \geq \tau(R,\eps) \\
& \srsd \|\phi(t,x,u)\|_{\A_{\eps,\gamma}} \leq \sigma_\eps(2 \gamma(\|u\|_{\Uc})).
\label{eq:ULIM_UAG_Section6}
\end{split}
\end{equation}
Since $\A \subset \A_{\eps,\gamma}$ for any $\eps>0$ it follows that 
\mbox{$\|x\|_{\A} \leq \|x\|_{\A_{\eps,\gamma}} + C$}, where

\begin{center}
$C:=\sup_{y\in\partial\A_{\eps,\gamma},\ z\in\partial\A,\ [y,z]\bigcap\A=\emptyset}\|y-z\|$ 
\end{center}
and
\mbox{$[y,z]:=\{ty+(1-t)z:t\in(0,1)\}$}.

Define $\tilde\tau(R,\eps):=\tau(R+C,\eps)$. We have:
\begin{equation}
\begin{split}
\|x\|_{\A_{\eps,\gamma}} \leq R,\ u&\in\Uc,\ t \geq \tilde\tau(R,\eps) \\
& \srsd \|\phi(t,x,u)\|_{\A_{\eps,\gamma}} \leq \sigma_\eps(2 \gamma(\|u\|_{\Uc})).
\label{eq:ULIM_UAG_Section7}
\end{split}
\end{equation}
This shows that $\Sigma$ is UAG w.r.t. $\A_{\eps,\gamma}$.

{
Note that here $\eps$ is a design parameter of a set $\A_{\eps,\gamma}$ w.r.t. which $\Sigma$ is UAG, and it is not connected to the parameter $\eps$ in the Definition~\ref{def:ULIM_UAG_etc} of the UAG property.
}

\textbf{$\Sigma$ is CUAG w.r.t. $\A_{\eps,\gamma}$}. Follows by Proposition~\ref{prop:CUAG_is_UAG_plus_BRS}.
\end{proof}

\begin{remark}
\label{rem:Minimal_UAG_sets}
Proposition~\ref{prop:ULIM_plus_BRS_implies_UAG} shows under certain assumptions that $\Sigma$ is UAG w.r.t. $\A_{\eps,\gamma}$ for any $\eps>0$. It is natural to ask, what is the smallest set w.r.t. which $\Sigma$ is UAG, in particular, 
whether $\Sigma$ is UAG w.r.t. $\bigcap_{\eps>0}\A_{\eps,\gamma}$. We do not follow this line here, but a reader may consult \cite[Section 3.2]{Mir17a} for related results.
{
In order to understand additional difficulties which arise on this way, note that if $\varepsilon_1<\varepsilon_2$, then $f_{\varepsilon_1}(s) \geq f_{\varepsilon_2}(s)$ for all $s\in\R_+$ (this follows from \eqref{eq:f_eps_def}). However, this does not imply that there is a continuous function $f_0$ with $f_0(0)=0$ so that $f_0(s)\geq f_\varepsilon(s)$ for any $s\geq 0$ and any $\varepsilon>0$.
}
\end{remark}

Finally, we can prove the main result of this paper:
\begin{proof}\textbf{(of Theorem~\ref{thm:RFC_weak_attr_imply_pUGAS})}

\ref{enum:ISpS_item1} $\Rightarrow$ \ref{enum:ISpS_item2}. 
Assume, that $\Sigma$ is an ISpS control system.
Then there are $\beta\in\KL$, $\gamma\in\Kinf$ and $c>0$ so that for all $x\in X$, $t\geq 0$ and $u\in\Uc$ we have
\begin{equation}
\label{eq:practical_UGAS_Xnorm}
\hspace{-1mm}\|\phi(t,x,u)\| \leq \beta(\|x\|,t) +\gamma(\|u\|_{\Uc}) + c.
\end{equation}
{
For any $y\in X$ and any $c>0$ it holds that $\|y\|_{\overline{B_c(0)}}= \max\{\|y\| - c,0\}$ and $\|y\|\leq \|y\|_{\overline{B_c(0)}} + c$ and we infer from 
\eqref{eq:practical_UGAS_Xnorm} for $\phi(t,x,u)\notin \overline{B_c(0)}$ that
\begin{equation}
\label{eq:practical_UGAS_Xnorm_Bc0}
\hspace{-1mm}\|\phi(t,x,u)\|_{\overline{B_c(0)}} \leq \beta(\|x\|_{\overline{B_c(0)}} +c,t) +\gamma(\|u\|_{\Uc}).
\end{equation}
Otherwise, if $\phi(t,x,u)\in \overline{B_c(0)}$, then $\|\phi(t,x,u)\|_{\overline{B_c(0)}}=0$ and \eqref{eq:practical_UGAS_Xnorm_Bc0} also holds.
Thus $\Sigma$ is CUAG w.r.t. $\overline{B_c(0)}$ (however, $\overline{B_c(0)}$ does not have to be 0-invariant).
According to Proposition~\ref{prop:ULIM_plus_BRS_implies_UAG} there is a bounded 0-invariant set $\A$ so that $\Sigma$ is CUAG w.r.t. $\A$.

\ref{enum:ISpS_item2} $\Rightarrow$ \ref{enum:ISpS_item3}.  Clear.

\ref{enum:ISpS_item3} $\Rightarrow$ \ref{enum:ISpS_item1}. Proposition~\ref{prop:ULIM_plus_BRS_implies_UAG} implies that there is a bounded set $\A\subset X$ so that $\Sigma$ is CUAG w.r.t. $\A$. Proposition~\ref{prop:CUAG_implies_ISpS} shows ISpS of $\Sigma$.
}
%
%Let $\Sigma$ be BRS and ULIM w.r.t. $\A$.
%Proposition~\ref{prop:lemmas_from_MiW17b}, item~\ref{enum:ULIM_implies_UGB} implies that $\Sigma$ is UGB.
%By Proposition~\ref{prop:ULIM_plus_BRS_implies_UAG} $\Sigma$ is UAG w.r.t. a 0-invariant set $\A_{\eps,\gamma}$, for any $\eps>0$.
%Finally, Proposition~\ref{prop:lemmas_from_MiW17b}, item~\ref{enum:UAG_and_UGB_imply_ISpS} shows that $\Sigma$ is ISpS.
%
%
%\ref{enum:ISpS_item2} $\Rightarrow$ \ref{enum:ISpS_item21}. Follows by Proposition~\ref{prop:CUAG_is_UAG_plus_BRS}.
\end{proof}

%\begin{remark}
%\label{rem:Forward_completeness_is_not_enough}
%Theorem~\ref{thm:RFC_weak_attr_imply_pUGAS} does not hold, if we require instead of FC merely forward completeness, see an example in \cite{MiW17a}.
%\end{remark}
%

\section{Special classes of systems}
\label{sec:Special_Classes_of_systems}

{
One of the criteria for ISpS, shown in Theorem~\ref{thm:RFC_weak_attr_imply_pUGAS}, states that ISpS of a control system $\Sigma$ is equivalent to existence of a 0-invariant set $\A$ so that $\Sigma$ has a CUAG property w.r.t. $\A$. It is natural to ask whether an ISS property (which is stronger than CUAG) holds w.r.t. this set. 
This problem can be approached using the following result:
\begin{proposition}
\label{prop:ISS_wrt_A_criterion}
Let $\A \subset X$ be bounded. A control system $\Sigma$ is ISS w.r.t. $\A$ iff $\Sigma$ is CUAG w.r.t. $\A$ and $\A$ is a robustly 0-invariant set.
\end{proposition}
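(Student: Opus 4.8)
The plan is to treat the two implications separately, with essentially all of the substance residing in the converse direction.

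For the direction "$\Rightarrow$", suppose $\Sigma$ is ISS w.r.t. $\A$ with $\beta\in\KL$, $\gamma\in\Kinf$. CUAG is immediate: since $\beta(\cdot,t)$ is increasing, $\beta(\|x\|_{\A},t)\le\beta(\|x\|_{\A}+C,t)$ for any fixed $C>0$, so \eqref{CUAG_sum} holds with the same $\beta,\gamma$. For robust $0$-invariance I would first note that, because $\|x\|_{\A}=\|x\|_{\overline{\A}}$ makes every distance-based hypothesis and conclusion identical for $\A$ and $\overline{\A}$, we may assume $\A$ closed. Then $0$-invariance follows by inserting $x\in\A$, $u=0$ into the ISS estimate: $\|\phi(t,x,0)\|_{\A}\le\beta(0,t)+\gamma(0)=0$, so $\phi(t,x,0)\in\A$. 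For the robustness inequality \eqref{eq:RobInvSet} I would use $\beta(r,t)\le\beta(r,0)$ to get $\|\phi(t,x,u)\|_{\A}\le\beta(\|x\|_{\A},0)+\gamma(\|u\|_{\Uc})$ for all $t\ge0$, and then, given $\eps>0$, pick $\delta>0$ so small that $\beta(\delta,0)+\gamma(\delta)\le\eps$; this $\delta$ serves for every horizon $h$.

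For the direction "$\Leftarrow$", assume CUAG w.r.t. $\A$ and robust $0$-invariance. By Proposition~\ref{prop:CUAG_is_UAG_plus_BRS}, $\Sigma$ is BRS and UAG w.r.t. $\A$, and evaluating \eqref{CUAG_sum} at $t=0$ and splitting $\beta(\|x\|_{\A}+C,0)\le\beta(2\|x\|_{\A},0)+\beta(2C,0)$ shows $\Sigma$ is UGB w.r.t. $\A$. The only gap between CUAG and ISS is the offset $C$ inside $\beta$, which allows a fixed overshoot even when starting in $\A$; the role of robust $0$-invariance is precisely to forbid this near $\A$. The strategy is therefore: (i) upgrade robust $0$-invariance together with UAG to a uniform local stability (ULS) statement holding for all $t\ge0$; (ii) combine ULS with UGB to obtain uniform global stability (UGS), i.e. the bound \eqref{UGBAbschaetzung} with $c=0$; and (iii) feed UGS and UAG into the $\KL$-construction of the proof of Proposition~\ref{prop:CUAG_is_UAG_plus_BRS} to extract a genuine ISS estimate.

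The hard part is step (i), because robust $0$-invariance controls the trajectory only on finite horizons $[0,h]$, whereas ISS needs an all-time bound. Here I would glue the two ingredients: fix $\eps>0$, take the UAG radius $r:=\eps$ and set $\tau^{*}:=\tau(\eps/2,\eps)$ from \eqref{UAG_Absch}; on $[0,\tau^{*}]$ control the trajectory by \eqref{eq:RobInvSet} with $h=\tau^{*}$, giving $\delta_1=\delta(\eps,\tau^{*})$, while for $t\ge\tau^{*}$ the UAG bound yields $\|\phi(t,x,u)\|_{\A}\le\eps/2+\gamma(\|u\|_{\Uc})$. Choosing $\delta:=\min\{\delta_1,\eps,\gamma^{-1}(\eps/2)\}$ then gives $\|x\|_{\A}\le\delta$, $\|u\|_{\Uc}\le\delta\ \Rightarrow\ \sup_{t\ge0}\|\phi(t,x,u)\|_{\A}\le\eps$. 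Taking $\eps\mapsto\delta(\eps)$ nondecreasing and vanishing at $0$, a standard comparison argument produces $\sigma_0\in\K$ and $r_0>0$ with $\|\phi(t,x,u)\|_{\A}\le\sigma_0(\|x\|_{\A})+\sigma_0(\|u\|_{\Uc})$ for all $t\ge0$ whenever $\|x\|_{\A},\|u\|_{\Uc}\le r_0$. For step (ii) I would paste ULS and UGB: on $\|x\|_{\A},\|u\|_{\Uc}\le r_0$ use the ULS bound, and elsewhere absorb the UGB constant via $c\le(c/r_0)\max\{\|x\|_{\A},\|u\|_{\Uc}\}$, which is legitimate exactly because that region has $\max\{\|x\|_{\A},\|u\|_{\Uc}\}>r_0$; summing the resulting comparison functions gives $\hat\sigma,\hat\gamma\in\Kinf$ with $\|\phi(t,x,u)\|_{\A}\le\hat\sigma(\|x\|_{\A})+\hat\gamma(\|u\|_{\Uc})$ for all $x,u,t$, i.e. UGS. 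Finally, for step (iii), I would rerun the construction in the proof of Proposition~\ref{prop:CUAG_is_UAG_plus_BRS} starting from this UGS bound (no constant $c$) instead of UGB: for fixed $r$ set $\eps_n:=2^{-n}\hat\sigma(r)$, $\tau_n:=\tau(\eps_n,r)$, define the auxiliary $\tilde\omega$ and majorize it by a $\KL$ function via \cite[Lemma~7]{MiW17b}; the absence of $c$ means no shift $r\mapsto r-c$ is needed and $\beta(0,t)\equiv0$, so the outcome $\|\phi(t,x,u)\|_{\A}\le\beta(\|x\|_{\A},t)+\gamma(\|u\|_{\Uc})$ is exactly ISS w.r.t. $\A$. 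The main obstacle is the infinite-horizon ULS of step (i); steps (ii) and (iii) are routine once UGS is in hand.
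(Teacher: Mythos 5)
Your proof is correct and, in substance, it follows the same route as the paper: the paper's own proof of Proposition~\ref{prop:ISS_wrt_A_criterion} is a one-line citation of \cite[Theorem 5]{MiW17b} (stated there for $\A=\{0\}$) combined with Proposition~\ref{prop:CUAG_is_UAG_plus_BRS}, with the remark that the case of a general bounded set is ``completely analogous''. Your steps (i)--(iii) --- gluing robust invariance on $[0,\tau^{*}]$, $\tau^{*}=\tau(\eps/2,\eps)$, to the UAG tail to obtain an all-time local (ULS) bound; merging ULS with UGB into UGS by absorbing the constant via $c\le (c/r_0)\max\{\|x\|_{\A},\|u\|_{\Uc}\}$ outside the local region; and then the dyadic $\eps_n$/$\tau_n$ construction majorized by a $\KL$ function --- are exactly the skeleton of the cited proof, so you have written out in full precisely what the paper's ``analogous'' conceals. (One wording slip: in the UGB derivation you do not ``evaluate \eqref{CUAG_sum} at $t=0$'', which would only bound $\|\phi(0,x,u)\|_{\A}=\|x\|_{\A}$; what you actually use, as your displayed splitting shows, is the majorization $\beta(r,t)\le\beta(r,0)$.)

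One point deserves more care than your claim that ``every distance-based hypothesis and conclusion is identical for $\A$ and $\overline{\A}$''. ISS, CUAG and the robustness estimate \eqref{eq:RobInvSet} are indeed distance-based, but $0$-invariance is a membership condition, $\phi(t,x,0)\in\A$, and it is \emph{not} preserved when $\A$ is replaced by $\overline{\A}$ or vice versa. In fact, for non-closed bounded $\A$ the implication ``ISS w.r.t.\ $\A$ $\Rightarrow$ $\A$ is $0$-invariant'' fails: for $\dot x=-x+u$ on $\R$ and $\A:=[-1,1]\setminus\{1/2\}$ one has $\|x\|_{\A}=\|x\|_{[-1,1]}$ for all $x$, hence ISS w.r.t.\ $\A$, yet the trajectory from $x_0=3/4$ with $u\equiv 0$ passes through $1/2\notin\A$. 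So your ``WLOG $\A$ closed'' is not a cosmetic normalization: what your forward direction actually proves is robust $0$-invariance of $\overline{\A}$, which is the only version of the statement that can hold for general bounded $\A$; the literal statement tacitly needs $\A$ closed (consistent with \cite[Theorem 5]{MiW17b}, which concerns the closed set $\{0\}$). Your converse direction is unaffected by this, since there you use only the $\eps$--$\delta$ estimate \eqref{eq:RobInvSet} and never the membership condition. In short: the argument is sound, matches the machinery behind the paper's citation, and in passing makes explicit a closedness hypothesis that the paper leaves implicit.
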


\begin{proof}
This result has been shown in \cite[Theorem 5]{MiW17b} for $\A=\{0\}$. 
{
In view of a characterization of the CUAG property in Proposition~\ref{prop:CUAG_is_UAG_plus_BRS}.
}
The proof for general bounded sets $\A$ is completely analogous and hence is omitted.
\end{proof}

However, the proof or disproof of robust 0-invariance of the constructed 0-invariant set $\A$ is in general a difficult task. 
In Section~\ref{sec:Lipschitz_flows} we show that this is possible for systems with a Lipschitz continuous flow under certain restrictions on the input space. The notion of $s$-invariance and the corresponding Lemma~\ref{lem:RobustInvariance_of_s_invariant_sets} help us heavily on this way.
Next, in Section~\ref{sec:Semilinear_equations} above criteria will be applied to semilinear evolution equations in Banach spaces.
Finally, Section~\ref{sec:ISpS_ODEs} is devoted to ODE systems. In this case we can strengthen the results even further, due to the fact that ULIM and LIM notions coincide for ODE systems.
}

\subsection{Systems with Lipschitz continuous flows}
\label{sec:Lipschitz_flows}

\begin{definition}
\label{def:Lipschitz}
{ The flow of a control system $\Sigma$} is called Lipschitz continuous on compact intervals (for uniformly bounded inputs), if 
for any $h>0$ and any $r>0$ there is $L>0$ s.t. for any $x,y \in \overline{B_r}$,
all $u \in \overline{B_{r,\Uc}}$ and all $t \in [0,h]$ it holds that 
\begin{eqnarray}
\|\phi(t,x,u) - \phi(t,y,u) \| \leq L \|x-y\|.
\label{eq:Flow_is_Lipschitz}
\end{eqnarray}    
\end{definition}
Many classes of systems possess flows which are Lipschitz continuous on compact intervals. Semilinear evolution equations and ODEs with Lipschitz continuous nonlinearities, which are considered next, are particular of examples of such kind of systems. Lipschitz continuity of the flow helps to prove such significant results as e.g. converse Lyapunov theorems for infinite-dimensional systems \cite[Section 3.4]{KaJ11b}, \cite{MiW17a}.

\begin{lemma}
\label{lem:RobustInvariance_of_s_invariant_sets}
Let a control system $\Sigma$ be given and let $\A \subset X$ be a bounded $s$-invariant set, for a certain $s>0$. If the flow of $\Sigma$ is Lipschitz continuous on compact intervals, then $\A$ is a robustly $s$-invariant set.
\end{lemma}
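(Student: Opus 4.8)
The plan is to derive robust $s$-invariance directly from the Lipschitz estimate \eqref{eq:Flow_is_Lipschitz}, by comparing the trajectory started at a state $x$ close to $\A$ with the trajectory started at a genuine point $y\in\A$ under the \emph{same} input $u$. Since $s>0$, a sufficiently small input is admissible in the sense of $s$-invariance, so the comparison trajectory emanating from $y$ never leaves $\A$; the distance $\|\phi(t,x,u)\|_{\A}$ is then bounded by the Lipschitz-controlled separation of the two trajectories.

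Concretely, I would fix $\eps>0$ and $h>0$. As $\A$ is bounded, set $r:=\|\A\|+2$, so that every $x$ with $\|x\|_{\A}\le 1$ satisfies $\|x\|\le r$; let $L=L(h,r)$ be the Lipschitz constant furnished by Definition~\ref{def:Lipschitz} for this $h$ and $r$. Given any $x$ with $\|x\|_{\A}\le\delta$ (for a $\delta\le 1$ still to be chosen), I would use $\|x\|_{\A}=\inf_{y\in\A}\|x-y\|$ to pick $y\in\A$ with $\|x-y\|\le\|x\|_{\A}+\delta\le 2\delta$; then $x,y\in\overline{B_r}$. Requiring in addition $\delta\le s$, any $u$ with $\|u\|_{\Uc}\le\delta$ lies both in $\overline{B_{s,\Uc}}$ and in $\overline{B_{r,\Uc}}$. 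Now $s$-invariance applied to $y\in\A$ and $u\in\overline{B_{s,\Uc}}$ gives $\phi(t,y,u)\in\A$ for all $t\ge 0$, whence $\|\phi(t,x,u)\|_{\A}\le\|\phi(t,x,u)-\phi(t,y,u)\|$. For $t\in[0,h]$ the Lipschitz estimate \eqref{eq:Flow_is_Lipschitz} then yields $\|\phi(t,x,u)\|_{\A}\le L\|x-y\|\le 2L\delta$, so the choice $\delta:=\min\{1,s,\eps/(2L)\}$ delivers $\|\phi(t,x,u)\|_{\A}\le\eps$, which is exactly the implication \eqref{eq:RobInvSet}.

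The only real subtlety — and the reason $s>0$ is assumed — is that the bound in \eqref{eq:Flow_is_Lipschitz} controls only the dependence on the initial state, not on the input; I therefore must keep $u$ fixed and perturb only the initial condition, pushing it from $x$ onto a nearby point $y$ of $\A$. This is harmless precisely because for $s>0$ small admissible inputs keep the trajectory from $y$ inside $\A$, so its distance to $\A$ vanishes; for $s=0$ this comparison trajectory need not remain in $\A$ and the argument would collapse, which is exactly the difficulty flagged in Remark~\ref{rem:0-invariance_and_CEP_property}. A secondary, routine point is that $\|x\|_{\A}$ need not be attained, which I handle above by working with an approximating $y\in\A$.
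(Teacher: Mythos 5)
Your proposal is correct and follows essentially the same argument as the paper's proof: compare $\phi(t,x,u)$ with $\phi(t,y,u)$ for a nearby $y\in\A$ under the same input, use $s$-invariance (valid since $\|u\|_{\Uc}\le\delta\le s$) to keep the reference trajectory in $\A$, and bound the separation via the Lipschitz estimate \eqref{eq:Flow_is_Lipschitz}. In fact you are slightly more careful than the paper on two minor points — you explicitly handle the possible non-attainment of the infimum defining $\|x\|_{\A}$ by taking $\|x-y\|\le\|x\|_{\A}+\delta$, and you include $s$ in the definition of $\delta$, whereas the paper's $\delta=\min\{\eps/L,r\}$ tacitly needs the same cap to guarantee $u\in\overline{B_{s,\Uc}}$.
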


\begin{proof}
Let $\A$ be a bounded $s$-invariant set, for a certain $s>0$. 

Pick any $\eps>0$, $h>0$ and set $r := \|\A\|+1$. For this $r$ there is a $L=L(2r,h)$ so that for any $x,y \in \overline{B_{2r}}$, $u \in \overline{B_{2r,\Uc}}$ and $t \in [0,h]$ it holds that
\begin{eqnarray*}
\|\phi(t,x,u) -\phi(t,y,u)\| \leq L \|x-y\|.
\end{eqnarray*}
Set $\delta:= \min\big\{\frac{\eps}{L},r\big\}$ and pick any $x\in \overline{B_\delta(\A)}$ (hence $\|x\|\leq \|\A\|+\delta < 2r$). 
Then there is a $y \in \A$: $\|x-y\| \leq \delta$ and the following estimates hold for $t\in[0,h]$ and 
$u \in \overline{B_{\min\{r,s\},\Uc}}$ (note that $\phi(t,y,u) \in\A$ due to $s$-invariance of $\A$):
%\begin{eqnarray*}
%\|\phi(t,x,u)\|_{\A} &=& \inf_{z\in \A}\|\phi(t,x,u) -z\| \\
%&\leq& \|\phi(t,x,u) - \phi(t,y,u)\|\\
%&\leq& L \|x-y\|\\
%&\leq& \eps.
%\end{eqnarray*}
\begin{align*}
\|\phi(t,x,u)\|_{\A} = \inf_{z\in \A}\|\phi(t,x,u) -z\| \leq& ~\|\phi(t,x,u) - \phi(t,y,u)\|\\
\leq& L \|x-y\|\\
\leq& \eps.
\end{align*}

This shows robust invariance of $\A$.
\end{proof}

{
\begin{lemma}
\label{lem:nonzero_Invariance_Aeps}
Let $\Sigma$ be a control system. Let also for each $u,v \in\Uc$ and for all $t>0$ the concatenation
\begin{center}
$w(s) := 
\begin{cases}
u(s), & \text{ if } s \in [0,t], \\ 
v(s-t),  & \text{ otherwise},
\end{cases}
$ 
\end{center}
of $u$ and $v$ at time $t$ 
satisfy the property
\begin{eqnarray}
\|w\|_{\Uc} \leq \max\{\|u\|_{\Uc}, \|v\|_{\Uc}\}.
\label{eq:Norm_of_concatenation_upper_estimate}
\end{eqnarray}
Then for any $\eps>0$ and any $\gamma\in\Kinf$ the space $\A_{\eps,\gamma}$ is $\gamma^{-1}(\tfrac{\eps}{2})$-invariant.
\end{lemma}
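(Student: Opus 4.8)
The plan is to fix an arbitrary point $y \in \A_{\eps,\gamma}$, an input $v\in\Uc$ with $\|v\|_\Uc \leq \gamma^{-1}(\tfrac{\eps}{2})$ and a time $t\geq 0$, and to show that $\phi(t,y,v)$ again lies in $\A_{\eps,\gamma}$; this is exactly the claimed $\gamma^{-1}(\tfrac{\eps}{2})$-invariance. By the definition \eqref{eq:Aeps_def} of $\A_{\eps,\gamma}$, the point $y$ admits a representation $y=\phi(s,x,u)$ for some $s\geq 0$, some $x\in B_\eps(\A)$ and some $u\in\Uc$ with $\|u\|_\Uc\leq\gamma^{-1}(\tfrac{\eps}{2})$.

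I would dispose of the degenerate case $s=0$ first: here the identity axiom ($\Sigma$\ref{axiom:Identity}) gives $y=\phi(0,x,u)=x\in B_\eps(\A)$, so $\phi(t,y,v)=\phi(t,x,v)$ lies in $\A_{\eps,\gamma}$ directly from \eqref{eq:Aeps_def}, since $\|v\|_\Uc\leq\gamma^{-1}(\tfrac{\eps}{2})$. For the main case $s>0$, I would introduce the concatenation $w$ of $u$ and $v$ at time $s$ as in the statement, which belongs to $\Uc$ by the axiom of concatenation (applicable precisely because $s>0$). Since $w$ coincides with $u$ on $[0,s]$, causality yields $\phi(s,x,w)=\phi(s,x,u)=y$, while $w(s+\cdot)=v$; hence the cocycle property ($\Sigma$\ref{axiom:Cocycle}) gives
\[
\phi(t,y,v)=\phi\big(t,\phi(s,x,w),w(s+\cdot)\big)=\phi(s+t,x,w).
\]

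It remains to estimate $\|w\|_\Uc$, and this is where the hypothesis \eqref{eq:Norm_of_concatenation_upper_estimate} enters decisively: since both $\|u\|_\Uc$ and $\|v\|_\Uc$ are bounded by $\gamma^{-1}(\tfrac{\eps}{2})$, we obtain $\|w\|_\Uc\leq\max\{\|u\|_\Uc,\|v\|_\Uc\}\leq\gamma^{-1}(\tfrac{\eps}{2})$. Consequently $\phi(t,y,v)=\phi(s+t,x,w)$ is the value of the flow at time $s+t\geq 0$ starting from $x\in B_\eps(\A)$ under an input of norm at most $\gamma^{-1}(\tfrac{\eps}{2})$, so by \eqref{eq:Aeps_def} it belongs to $\A_{\eps,\gamma}$, which completes the argument.

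The main obstacle is purely the bookkeeping around the concatenation rather than any genuine analytic difficulty: one must check that $w\in\Uc$, that causality reproduces the trajectory generating $y$ on $[0,s]$, and that the shifted input $w(s+\cdot)$ equals $v$ so that the cocycle identity splices the two trajectories correctly (the single-point ambiguity of $w$ at $\tau=s$ is harmless, as causality constrains the input only on $[0,s]$). The essential new ingredient, compared with the $0$-invariance argument inside Proposition~\ref{prop:ULIM_plus_BRS_implies_UAG}, is the bound \eqref{eq:Norm_of_concatenation_upper_estimate}: the concatenation axiom alone controls the norm only for concatenation with the zero input, whereas here the driving input $v$ need not vanish, so the hypothesis is exactly what upgrades $0$-invariance to $\gamma^{-1}(\tfrac{\eps}{2})$-invariance.
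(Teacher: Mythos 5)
Your proof is correct and follows essentially the same route as the paper's: represent $y=\phi(s,x,u)$ per \eqref{eq:Aeps_def}, concatenate $u$ and $v$ at time $s$, apply the cocycle property to get $\phi(t,y,v)=\phi(t+s,x,w)$, and use the hypothesis \eqref{eq:Norm_of_concatenation_upper_estimate} to bound $\|w\|_{\Uc}\leq\gamma^{-1}(\tfrac{\eps}{2})$. Your extra steps---treating $s=0$ separately (since the concatenation axiom is stated only for $t>0$) and invoking causality to verify $\phi(s,x,w)=y$---simply make explicit bookkeeping that the paper leaves implicit.
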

}

\begin{proof}
Pick any $y\in\A_{\eps,\gamma}$ and any $v\in \overline{B_{\gamma^{-1}(\frac{\eps}{2}),\Uc}}$. 
Then there are certain $s\geq 0$, $x \in B_\eps(\A)$ and $u\in  \overline{B_{\gamma^{-1}(\frac{\eps}{2}),\Uc}}$ so that $y=\phi(s,x,u)$.
Due to cocycle property it holds for each $t>0$ that
\begin{eqnarray}
\phi(t,y,v) = \phi(t+s,x,w),
\label{eq:cocycle_property_invariance_proof}
\end{eqnarray}
where $w$ is a concatenation of $u$ and $v$ at time $s$.

In view of the assumption \eqref{eq:Norm_of_concatenation_upper_estimate} it holds that 
$\|w\|_{\Uc}\leq \gamma^{-1}(\tfrac{\eps}{2})$, and hence $\phi(t+s,x,w) \in \A_{\eps,\gamma}$ due to the definition of $\A_{\eps,\gamma}$.
Hence $\phi(t,y,v) \in \A_{\eps,\gamma}$.
\end{proof}

\begin{remark}
\label{rem:Restrictions_on_the_input_space}
An additional assumption on the input space $\Uc$ in Lemma~\ref{lem:nonzero_Invariance_Aeps} restricts the class of inputs. In particular, the inputs from $L_p$ spaces or from Sobolev spaces do not fulfill it. However, the spaces of continuous, piecewise continuous and $L_\infty$ functions do satisfy it (w.r.t. the natural $\sup$ or esssup-norm respectively).
\end{remark}

Finally, we are able to characterize ISpS in terms of ISS:
\begin{theorem}
\label{thm:ISpS_equivalent_ISS_wrt_bounded_sets}
Let $\Sigma=(X,\Uc,\phi)$ be a control system, $\phi$ be Lipschitz continuous on compact time intervals and the input space $\Uc$ satisfy the assumptions of Lemma~\ref{lem:nonzero_Invariance_Aeps}.
Then:
\begin{center}
$\Sigma$ is ISpS $\quad\Iff\quad$ For any $s>0$ there is a bounded \\
\hfill $s$-invariant set $\A\subset X$: $\Sigma$ is ISS w.r.t. $\A$
\end{center}
%
%Then the following statements are equivalent:
%\begin{enumerate}[label=(\roman*)]
    %\item\label{enum:ISpS_Lipschitz_item1} $\Sigma$ is ISpS
    %\item\label{enum:ISpS_Lipschitz_item2} For any $s>0$ there is a bounded $s$-invariant set $\A\subset X$: $\Sigma$ is ISS w.r.t. $\A$.
    %%\item\label{enum:ISpS_Lipschitz_item3} There is a bounded 0-invariant set $\A\subset X$: $\Sigma$ is ISS w.r.t. $\A$.
%\end{enumerate}
\end{theorem}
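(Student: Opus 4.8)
The plan is to prove the two implications separately, with the forward direction carrying essentially all of the work and the converse being immediate.

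For the converse ``$\Leftarrow$'', I would simply instantiate the hypothesis at one value, say $s=1$, obtaining a bounded set $\A$ such that $\Sigma$ is ISS w.r.t. $\A$. Since ISS w.r.t. $\A$ is by Definition~\ref{Def:ISpS_wrt_set} exactly the estimate \eqref{isps_sum} with $c=0$, and $\A$ is bounded, Definition~\ref{Def:ISpS} yields that $\Sigma$ is ISpS. Neither the Lipschitz hypothesis nor the input-space assumption is needed for this direction.

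For the forward direction ``$\Rightarrow$'', assume $\Sigma$ is ISpS and fix an arbitrary $s>0$. By the equivalence of items \ref{enum:ISpS_item1} and \ref{enum:ISpS_item3} in Theorem~\ref{thm:RFC_weak_attr_imply_pUGAS}, $\Sigma$ is BRS and ULIM w.r.t. some bounded set $\A$, with a gain $\gamma\in\Kinf$ as in \eqref{eq:ULIM_ISS_section}. The central idea is to use the enlarged set $\A_{\eps,\gamma}$ from \eqref{eq:Aeps_def} with the parameter $\eps$ tuned to the prescribed invariance level: I would set $\eps:=2\gamma(s)$, so that $\gamma^{-1}(\tfrac{\eps}{2})=s$, and take $\A_{2\gamma(s),\gamma}$ as the candidate set. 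I would then assemble the conclusion from three previously established facts about this set. First, Proposition~\ref{prop:ULIM_plus_BRS_implies_UAG} shows that $\A_{2\gamma(s),\gamma}$ is bounded, $0$-invariant, and that $\Sigma$ is CUAG w.r.t. it. Second, Lemma~\ref{lem:nonzero_Invariance_Aeps}, whose input-space assumption \eqref{eq:Norm_of_concatenation_upper_estimate} is in force by hypothesis, upgrades this to $\gamma^{-1}(\tfrac{\eps}{2})$-invariance, i.e. to $s$-invariance of $\A_{2\gamma(s),\gamma}$. Third, since the flow is Lipschitz continuous on compact intervals and $s>0$, Lemma~\ref{lem:RobustInvariance_of_s_invariant_sets} promotes this $s$-invariance to \emph{robust} $s$-invariance.

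The step I expect to require the most care is bridging to Proposition~\ref{prop:ISS_wrt_A_criterion}, which is stated in terms of robust $0$-invariance, whereas the construction delivers robust $s$-invariance with $s>0$. The observation that resolves this gap is that the robustness condition \eqref{eq:RobInvSet} is literally the same inequality for every invariance level, while $s$-invariance with $s>0$ is strictly stronger than $0$-invariance, since it quantifies over all $u\in\overline{B_{s,\Uc}}$ rather than only over $u\equiv 0$. Hence a robustly $s$-invariant set is in particular robustly $0$-invariant. With this in hand, Proposition~\ref{prop:ISS_wrt_A_criterion} applies directly: $\Sigma$ is CUAG w.r.t. $\A_{2\gamma(s),\gamma}$ and this bounded set is robustly $0$-invariant, so $\Sigma$ is ISS w.r.t. $\A_{2\gamma(s),\gamma}$. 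As $\A_{2\gamma(s),\gamma}$ is bounded and $s$-invariant and $s>0$ was arbitrary, the forward implication, and with it the theorem, follows.
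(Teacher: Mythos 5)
Your proposal is correct and takes essentially the same route as the paper's proof: the forward direction chains Theorem~\ref{thm:RFC_weak_attr_imply_pUGAS}, Proposition~\ref{prop:ULIM_plus_BRS_implies_UAG}, Lemma~\ref{lem:nonzero_Invariance_Aeps}, Lemma~\ref{lem:RobustInvariance_of_s_invariant_sets} and Proposition~\ref{prop:ISS_wrt_A_criterion} exactly as the paper does, with your choice $\eps:=2\gamma(s)$ being just a fixed instance of the paper's closing observation that $\gamma^{-1}(\tfrac{\eps}{2})$ can be made arbitrarily large (and your direct derivation of the converse from the definitions is a harmless simplification of the paper's appeal to Proposition~\ref{prop:CUAG_implies_ISpS}). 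Your explicit remark that robust $s$-invariance with $s>0$ subsumes the robust $0$-invariance required by Proposition~\ref{prop:ISS_wrt_A_criterion} correctly spells out a step the paper leaves implicit.
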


\begin{proof}
%\ref{enum:ISpS_Lipschitz_item2} $\Rightarrow$ \ref{enum:ISpS_Lipschitz_item3}. Clear.

{
%\ref{enum:ISpS_Lipschitz_item2} $\Rightarrow$ \ref{enum:ISpS_Lipschitz_item1}. 
"$\Leftarrow$". Since $\Sigma$ is CUAG w.r.t. $\A$, Proposition~\ref{prop:CUAG_implies_ISpS} shows that $\Sigma$ is ISpS.

"$\Rightarrow$". According to Theorem~\ref{thm:RFC_weak_attr_imply_pUGAS} { and Proposition~\ref{prop:ULIM_plus_BRS_implies_UAG}}, ISpS of $\Sigma$ with a corresponding gain $\gamma\in\Kinf$ implies that for each $\eps>0$ the system $\Sigma$ is CUAG w.r.t. $\A_{\eps,\gamma}$.
}
 Since the assumptions of Lemma~\ref{lem:nonzero_Invariance_Aeps} hold, 
$\A_{\eps,\gamma}$ is a $\gamma^{-1}(\tfrac{\eps}{2})$-invariant bounded set. 
Now Lemma~\ref{lem:RobustInvariance_of_s_invariant_sets} shows that $\A_{\eps,\gamma}$ is a robust $\gamma^{-1}(\tfrac{\eps}{2})$-invariant bounded set.
Finally, Proposition~\ref{prop:ISS_wrt_A_criterion} proves that $\Sigma$ is ISS w.r.t. $\A_{\eps,\gamma}$.
Since $\eps>0$ can be chosen arbitrarily, and since $\gamma \in\Kinf$, then $\gamma^{-1}(\tfrac{\eps}{2})$ can be made arbitrarily large by choosing sufficiently large $\eps$.
\end{proof}

%\begin{remark}
%\label{rem:ISpS_and_ISS_wrt_bounded_sets}
%Note that an implications \ref{enum:ISpS_Lipschitz_item2} $\Rightarrow$ \ref{enum:ISpS_Lipschitz_item3} $\Rightarrow$ \ref{enum:ISpS_Lipschitz_item1} in 
%Theorem~\ref{thm:ISpS_equivalent_ISS_wrt_bounded_sets} hold for general control systems $\Sigma$, without any further restrictions on the properties of the flow $\phi$ and $\Uc$.
%\end{remark}

%\addtolength{\textheight}{-80mm}

\subsection{Semilinear evolution equations}
\label{sec:Semilinear_equations}

Here we specify the results obtained previously to semilinear evolution equations in Banach spaces.

Let $X$ be a Banach space and $A$ be the generator of a strongly continuous semigroup $T$ of bounded linear operators on $X$ and let $f:X\times U \to X$. Consider the system
\begin{equation}
\label{InfiniteDim}
\dot{x}(t)=Ax(t)+f(x(t),u(t)), \quad x(t) \in X,\ u(t) \in U.
\end{equation}

We study mild solutions of \eqref{InfiniteDim}, i.e. solutions $x:[0,\tau] \to X$ of the integral equation
\begin{align}
\label{InfiniteDim_Integral_Form}
x(t)=T(t)x(0) + \int_0^t T(t-s)f(x(s),u(s))ds,
\end{align}
belonging to the space of continuous functions $C([0,\tau],X)$ for some $\tau>0$.

We assume that the set of input values $U$ is a normed
linear space and that the input functions belong to the space
$\Uc:=PC(\R_+,U)$ of globally bounded, piecewise continuous functions $u:\R_+ \to U$, which
are right continuous. The norm of $u \in \Uc$ is given by
$\|u\|_{\Uc}:=\sup_{t \geq 0}\|u(t)\|_U$. 

%For system \eqref{InfiniteDim}, we use the following assumption concerning the nonlinearity $f$.
%\begin{Ass}
%\label{Assumption1} We assume that:
%\begin{enumerate}[label=(\roman*)]
    %\item $f:X \times U \to X$ is Lipschitz continuous on bounded
%subsets of $X$, uniformly with respect to the second argument.
%%i.e.  for
%%all $C>0$, there exists a $L_f(C)>0$, such that for all $ x,y \in B_C $ and for all $v \in U$, it holds that
%%\begin{eqnarray}
%%\|f(x,v)-f(y,v)\| \leq L_f(C) \|x-y\|.
%%\label{eq:Lipschitz}
%%\end{eqnarray}
    %\item $f(x,\cdot)$ is continuous for all $x \in X$.
    %\item $f(0,0)=0$.
%\end{enumerate}
%\end{Ass}
{
We assume that the solution of \eqref{InfiniteDim} exists and is unique on $\R_+$ (i.e. \eqref{InfiniteDim} is forward complete). 
ISpS of \eqref{InfiniteDim} can be characterized as follows:
}
\begin{proposition}
\label{prop:ISpS_for_Evolution_Equations_in_Banach_Spaces}
Consider a BRS system \eqref{InfiniteDim} satisfying 
\begin{itemize}
    \item { $f:X \times U \to X$ is Lipschitz continuous on bounded subsets of $X$.}
%i.e.  for
%all $C>0$, there exists a $L_f(C)>0$, such that for all $ x,y \in B_C $ and for all $v \in U$, it holds that
%\begin{eqnarray}
%\|f(x,v)-f(y,v)\| \leq L_f(C) \|x-y\|.
%\label{eq:Lipschitz}
%\end{eqnarray}
    \item $f(x,\cdot)$ is continuous for all $x \in X$.
    %\item $f(0,0)=0$.
\end{itemize}
The following statements are equivalent:
\begin{enumerate}[label=(\roman*)]
    \item\label{enum:ISpS_Banach_item1} {\eqref{InfiniteDim}} is ISpS
    \item\label{enum:ISpS_Banach_item2} For any $s>0$ there is a bounded $s$-invariant set $\A \subset X$: {\eqref{InfiniteDim}} is ISS w.r.t. $\A$.
    %\item\label{enum:ISpS_Banach_item3} For any $s>0$ there is a bounded $s$-invariant set $\A \subset X$: {\eqref{InfiniteDim}} is UAG w.r.t. $\A$.
    %\item\label{enum:ISpS_Banach_item4} There is a bounded $0$-invariant set $\A \subset X$: {\eqref{InfiniteDim}} is ISS w.r.t. $\A$.
    \item\label{enum:ISpS_Banach_item5} There is a bounded set $\A \subset X$: {\eqref{InfiniteDim}} is ULIM w.r.t. $\A$.
%{    \item\label{enum:ISpS_Banach_item6} There is a bounded set $\A\subset X$ so that \eqref{InfiniteDim} is CUAG w.r.t. $\A$.}
\end{enumerate}
\end{proposition}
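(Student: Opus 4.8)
The plan is to read this proposition as a direct corollary of the two abstract equivalences already proved: Theorem~\ref{thm:RFC_weak_attr_imply_pUGAS} supplies the equivalence \ref{enum:ISpS_Banach_item1}~$\Iff$~\ref{enum:ISpS_Banach_item5}, and Theorem~\ref{thm:ISpS_equivalent_ISS_wrt_bounded_sets} supplies \ref{enum:ISpS_Banach_item1}~$\Iff$~\ref{enum:ISpS_Banach_item2}. Consequently the whole substance of the proof is to check that the mild-solution semiflow of \eqref{InfiniteDim} is a control system in the sense of Definition~\ref{Steurungssystem} and that it satisfies the two structural hypotheses those theorems require: that the input space $\Uc=PC(\R_+,U)$ complies with the assumptions of Lemma~\ref{lem:nonzero_Invariance_Aeps}, and that the flow is Lipschitz continuous on compact time intervals in the sense of Definition~\ref{def:Lipschitz}. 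Forward completeness is assumed, and the BRS property is part of the hypothesis, so neither needs to be reproved; in particular, under the standing BRS assumption item~\ref{enum:ISpS_item3} of Theorem~\ref{thm:RFC_weak_attr_imply_pUGAS} reduces exactly to item~\ref{enum:ISpS_Banach_item5}.

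First I would verify the axioms of Definition~\ref{Steurungssystem} for the mild solution $\phi(t,x,u)=x(t)$ defined by \eqref{InfiniteDim_Integral_Form}. The identity property, causality and the cocycle property are immediate from the variation-of-constants formula together with the semigroup law $T(t+h)=T(t)T(h)$, and continuity of $t\mapsto\phi(t,x,u)$ follows from strong continuity of $T$ and piecewise continuity of $u$. For the input space, $\Uc=PC(\R_+,U)$ with $\|u\|_{\Uc}=\sup_{t\ge0}\|u(t)\|_U$ is closed under concatenation, and for the supremum norm the concatenation of $u$ and $v$ has norm exactly $\max\{\|u\|_{\Uc},\|v\|_{\Uc}\}$; hence both the bound \eqref{eq:Norm_of_concatenation_upper_estimate} of Lemma~\ref{lem:nonzero_Invariance_Aeps} and the weaker requirement of Remark~\ref{rem:Additional_Restriction} hold, as already noted in Remark~\ref{rem:Restrictions_on_the_input_space}.

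The hard part will be establishing Lipschitz continuity of the flow. Fix $h>0$ and $r>0$, and take $x,y\in\overline{B_r}$, $u\in\overline{B_{r,\Uc}}$. Since the system is BRS, there is $R=R(r,h)$ with $\phi(s,x,u),\phi(s,y,u)\in\overline{B_R}$ for all $s\in[0,h]$; the Lipschitz hypothesis on $f$ then furnishes a constant $L_f$ with $\|f(a,v)-f(b,v)\|\le L_f\|a-b\|$ for $a,b\in\overline{B_R}$ and $v$ ranging over the bounded set of values $\{u(s):\|u\|_{\Uc}\le r\}$, while strong continuity of $T$ gives $M,\omega$ with $\|T(t)\|\le Me^{\omega t}$. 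Subtracting the two integral equations \eqref{InfiniteDim_Integral_Form} and estimating yields
\[
\|\phi(t,x,u)-\phi(t,y,u)\|\le Me^{\omega t}\|x-y\|+\int_0^t Me^{\omega(t-s)}L_f\,\|\phi(s,x,u)-\phi(s,y,u)\|\,ds,
\]
whence Gr\"onwall's inequality gives \eqref{eq:Flow_is_Lipschitz} with a constant $L$ depending only on $M,\omega,L_f,h$. The one delicate point is that the Lipschitz bound on $f$ is only available on a bounded ball, so both trajectories must first be confined to a common ball $\overline{B_R}$; this is precisely what BRS provides, and it is the reason BRS is included among the hypotheses.

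With the control-system axioms, the input-space property, and the Lipschitz flow verified, the proof concludes by applying the two theorems: Theorem~\ref{thm:RFC_weak_attr_imply_pUGAS} delivers \ref{enum:ISpS_Banach_item1}~$\Iff$~\ref{enum:ISpS_Banach_item5}, and Theorem~\ref{thm:ISpS_equivalent_ISS_wrt_bounded_sets} delivers \ref{enum:ISpS_Banach_item1}~$\Iff$~\ref{enum:ISpS_Banach_item2}, which together establish the equivalence of all three statements.
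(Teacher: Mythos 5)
Your proposal is correct and follows essentially the same route as the paper: check that the input space $PC(\R_+,U)$ satisfies the concatenation bound \eqref{eq:Norm_of_concatenation_upper_estimate} of Lemma~\ref{lem:nonzero_Invariance_Aeps}, check Lipschitz continuity of the flow in the sense of Definition~\ref{def:Lipschitz}, and then apply Theorem~\ref{thm:RFC_weak_attr_imply_pUGAS} for \ref{enum:ISpS_Banach_item1}~$\Iff$~\ref{enum:ISpS_Banach_item5} and Theorem~\ref{thm:ISpS_equivalent_ISS_wrt_bounded_sets} for \ref{enum:ISpS_Banach_item1}~$\Iff$~\ref{enum:ISpS_Banach_item2}. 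The only difference is that where the paper simply cites \cite[Section VII]{MiW17b} for the Lipschitz flow property, you supply the standard self-contained argument (confining both trajectories to a common ball via BRS so the local Lipschitz bound on $f$ applies, then applying Gr\"onwall to the subtracted mild-solution formulas), which correctly fills in the step the paper delegates to that reference.
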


\begin{proof}
According to \cite[Section VII]{MiW17b}, a BRS system \eqref{InfiniteDim} satisfying assumptions of the proposition has a flow which is Lipschitz continuous on compact intervals {
(in \cite[Section VII]{MiW17b} a stronger assumption on $f$ has been imposed, but the assertion which we need here can be shown without these further requirements).
}
The input space $\Uc:=PC(\R_+,U)$ satisfies the assumptions of Lemma~\ref{lem:nonzero_Invariance_Aeps}.
Now the application of Theorems~\ref{thm:RFC_weak_attr_imply_pUGAS},~\ref{thm:ISpS_equivalent_ISS_wrt_bounded_sets} proves the claim of the proposition.
\end{proof}
%
%
%In \cite[Lemma 9]{MiW17b} the following has been shown:
%\begin{lemma}
%\label{lem:Regularity}
%Let  Assumption~\ref{Assumption1} hold and assume that \eqref{InfiniteDim} is BRS.
%Then \eqref{InfiniteDim} has a flow which is Lipschitz continuous on compact intervals (for uniformly bounded inputs).
%\end{lemma}
%

\subsection{Ordinary differential equations}
\label{sec:ISpS_ODEs}

Finally, we turn our attention to the ISS theory of ODEs
\begin{eqnarray}
\dot{x}=f(x,u),
\label{eq:ODE_Sys}
\end{eqnarray} 
where $f:\R^n\times \R^m \to \R^n$ is locally Lipschitz continuous w.r.t. the first argument and inputs $u$ belong to the set $\Uc:=L_\infty(\R_+,\R^m)$ of Lebesgue measurable globally essentially bounded functions with values in $\R^m$.

{We show that for this class of systems the characterizations of ISpS developed in the previous sections can be considerably strengthened.
}

The following result has been shown in \cite{MiW17b} for $\A=\{0\}$ on the basis of \cite[Corollary III.3]{SoW96}. The proof for general bounded $\A$ is analogous and thus omitted.
\begin{proposition}
\label{prop:Approachability_equals_Uniform_Approachability}
Consider a system \eqref{eq:ODE_Sys} with $\Uc$ as above. Let $\A \subset \R^n$ be any bounded set. 
Then $\Sigma$ is ULIM w.r.t. $\A$ if and only if $\Sigma$ is LIM w.r.t. $\A$.
\end{proposition}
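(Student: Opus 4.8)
The plan is to prove the two implications separately. The implication ``ULIM $\Rightarrow$ LIM'' is immediate from Definition~\ref{def:ULIM_UAG_etc}: the uniform approach time $\tau(\eps,r)$ furnished by ULIM is in particular an admissible (state- and input-dependent) approach time in the sense of LIM, with the same gain $\gamma$. All the content lies in ``LIM $\Rightarrow$ ULIM'', and here I would argue by contradiction, exploiting that closed bounded subsets of $\R^n$ are compact --- the feature that distinguishes ODEs from the general infinite-dimensional systems of Theorem~\ref{thm:RFC_weak_attr_imply_pUGAS}.

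Assume $\Sigma$ is LIM w.r.t. $\A$ with gain $\gamma\in\Kinf$ but fails ULIM with this $\gamma$. Negating \eqref{eq:ULIM_ISS_section}, there are $\eps,r>0$ (necessarily $r>\eps$, since for $r\le\eps$ the choice $\tau=0$ already works at $t=0$) together with sequences $\tau_k\to\infty$, states $x_k$ with $\|x_k\|_{\A}\le r$ and inputs $u_k\in\Uc$ such that
\[
\|\phi(t,x_k,u_k)\|_{\A} > \eps + \gamma(\|u_k\|_{\Uc})\qquad\text{for all }t\in[0,\tau_k].
\]
Evaluating at $t=0$ gives $r\ge\|x_k\|_{\A}>\eps+\gamma(\|u_k\|_{\Uc})$, so the inputs are uniformly bounded, $\|u_k\|_{\Uc}\le\gamma^{-1}(r)=:M$, and since $\A$ is bounded the initial states satisfy $\|x_k\|\le r+\|\A\|=:R$.

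Next I would build a limiting trajectory. As $\Sigma$ is forward complete it is BRS (Remark~\ref{rem:FC_and_BRS}), so on every interval $[0,T]$ the trajectories $\phi(\cdot,x_k,u_k)$ are uniformly bounded; since $f$ is bounded on bounded sets, the derivatives $t\mapsto f(\phi(t,x_k,u_k),u_k(t))$ are uniformly bounded, hence the trajectories are equicontinuous. By Arzel\`a--Ascoli and a diagonal argument over $T\in\N$, a subsequence converges uniformly on compact intervals to some $y\in C(\R_+,\R^n)$ with $y(0)=\lim x_k=:x_*\in\overline{B_R}$ (using compactness of $\overline{B_R}\subset\R^n$). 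Passing to a further subsequence, $u_k\stackrel{*}{\rightharpoonup}u_*$ in $L_\infty=(L_1)^*$. For each fixed $t$ we eventually have $t\le\tau_k$, and letting $k\to\infty$ in the displayed inequality --- using that $x\mapsto\|x\|_{\A}$ is $1$-Lipschitz and that $\|u_*\|_{\Uc}\le\liminf_k\|u_k\|_{\Uc}$ by weak-$*$ lower semicontinuity --- yields $\|y(t)\|_{\A}\ge\eps+\gamma(\|u_*\|_{\Uc})$ for all $t\ge0$. Provided $y$ coincides with the genuine trajectory $\phi(\cdot,x_*,u_*)$, this gives $\inf_{t\ge0}\|\phi(t,x_*,u_*)\|_{\A}\ge\eps+\gamma(\|u_*\|_{\Uc})>\gamma(\|u_*\|_{\Uc})$, contradicting LIM applied to $(x_*,u_*)$ with parameter $\eps/2$.

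The main obstacle is the last ``provided'': for a general locally Lipschitz $f$, weak-$*$ convergence of the inputs does not by itself force the Arzel\`a--Ascoli limit $y$ to be an \emph{exact} trajectory $\phi(\cdot,x_*,u_*)$ (the relaxed-control phenomenon). This is precisely the delicate ODE well-posedness point resolved in \cite[Corollary III.3]{SoW96}, on which \cite{MiW17b} based the case $\A=\{0\}$; it is exactly the step that fails for infinite-dimensional systems and forces one to assume ULIM outright in Theorem~\ref{thm:RFC_weak_attr_imply_pUGAS}. Since the norm $\|\cdot\|_{\A}$ enters every estimate above only through the $1$-Lipschitz distance function and the elementary bound $\|x\|\le\|x\|_{\A}+\|\A\|$, each step is insensitive to replacing $\{0\}$ by an arbitrary bounded $\A$; this is why the argument for general bounded $\A$ is verbatim the one for $\A=\{0\}$ and may be reduced to \cite{SoW96}.
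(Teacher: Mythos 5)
Your reduction steps are sound, and your bottom line in fact coincides with the paper's entire proof: the paper proves this proposition purely by citation, stating that the case $\A=\{0\}$ was shown in \cite{MiW17b} on the basis of \cite[Corollary III.3]{SoW96} and that the extension to bounded $\A$ is analogous --- which is exactly your closing observation that every estimate uses $\A$ only through the $1$-Lipschitz distance function and the bound $\|x\|\le\|x\|_{\A}+\|\A\|$. Moreover, your evaluation of the negated implication at $t=0$, yielding $\|u_k\|_{\Uc}\le\gamma^{-1}(r)$, is precisely the reduction to inputs with values in a compact subset of $\R^m$ that the genuine proof also requires, and you correctly identify the relaxed-control phenomenon as the crux.

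However, the way you propose to discharge your ``provided'' is wrong: \cite[Corollary III.3]{SoW96} cannot be inserted at that point of your contradiction argument. That corollary is a uniform reachability-time theorem --- if every trajectory starting in a compact set, driven by inputs with values in a compact set, enters a \emph{fixed open} set $\mathcal{O}\subset\R^n$, then it does so within a uniform time. It is not a closure or well-posedness statement, and it does not assert that your Arzel\`a--Ascoli/weak-$*$ limit $y$ equals $\phi(\cdot,x_*,u_*)$; that assertion is false in general (take $\dot x=u^2$ with $u_k(t)=\sin(kt)$: the $u_k$ converge weakly-$*$ to $0$, yet $\phi(t,0,u_k)\to t/2\neq 0=\phi(t,0,0)$), so no citation can validate the step at which you invoke it. The correct use of the corollary abandons the limit construction altogether: for $\gamma(\|u\|_{\Uc})\ge r$ the time $t=0$ already verifies \eqref{eq:ULIM_ISS_section}, so one may restrict to $\|u\|_{\Uc}\le\gamma^{-1}(r)$; then one removes the $u$-dependence of the target set $\{x:\|x\|_{\A}\le\eps+\gamma(\|u\|_{\Uc})\}$ by choosing $0=s_0<s_1<\dots<s_N=\gamma^{-1}(r)$ with $\gamma(s_{i+1})-\gamma(s_i)<\eps/2$: for $\|u\|_{\Uc}\le s_{i+1}$, LIM with margin $\eps/2$ forces entry into the fixed open set $\mathcal{O}_i:=\{x:\|x\|_{\A}<\eps+\gamma(s_i)\}$; the corollary, applied with the compact initial set $\overline{B_r(\A)}$ (closed and bounded in $\R^n$ --- this is where finite-dimensionality enters) and inputs valued in the closed $s_{i+1}$-ball of $\R^m$, yields a uniform time $\tau_i$; and for $\|u\|_{\Uc}\in[s_i,s_{i+1}]$ entry into $\mathcal{O}_i$ gives $\|\phi(t,x,u)\|_{\A}<\eps+\gamma(s_i)\le\eps+\gamma(\|u\|_{\Uc})$, so $\tau:=\max_i\tau_i$ serves in \eqref{eq:ULIM_ISS_section}. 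The hard analysis that defeats the relaxation obstruction lives inside Sontag and Wang's proof of Corollary III.3 itself, which both you and the paper treat as a black box; your sketch, read as a self-contained proof, stalls exactly where you say it does, and the repair is a restructuring, not a citation at the gap.
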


%\begin{proof}
%\end{proof}

%
%The following result due to \cite[Proposition 5.1]{LSW96} shows that forward completeness is equivalent to robust forward completeness for the systems \eqref{eq:ODE_Sys} with uniformly bounded disturbances.
%\begin{proposition} % Fact III.1
%\label{prop:boundedness_reachability_sets}
%Consider a system \eqref{eq:ODE_Sys} with $\Dc$ as above.
%%Let $D$ be a compact subset of $\R^m$, $f:\R^n\times D \to \R^n$ be locally Lipschitz continuous and $\Dc:=L_\infty(\R_+,D)$.
%If \eqref{eq:ODE_Sys} is forward complete, then \eqref{eq:ODE_Sys} is robustly forward complete.
%\end{proposition}

{
%Proposition~\ref{prop:CUAG_is_UAG_plus_BRS} for ODE system leads to the characterization of UAG:
%\begin{corollary}
%\label{cor:ODE_UAG}
%Let \eqref{eq:ODE_Sys} be forward complete.
%Then \eqref{eq:ODE_Sys} is UAG if and only if \eqref{eq:ODE_Sys} is CUAG.
%\end{corollary}
%
%\begin{proof}
%According to \cite[Proposition 5.1]{LSW96}, for \eqref{eq:ODE_Sys} forward completeness is equivalent to the boundedness of reachability sets.
%Now Proposition~\ref{prop:CUAG_is_UAG_plus_BRS} shows the claim.
%\end{proof}

Sontag and Wang defined in \cite[Section VI]{SoW96} the following concept:
\begin{definition}
\label{def:compact-ISS} 
\eqref{eq:ODE_Sys} is called compact ISS, if there is a compact 0-invariant set $\A\subset \R^n$  so that \eqref{eq:ODE_Sys} is UAG w.r.t. $\A$.
\end{definition}

 }

{ Criteria for practical ISS of a system \eqref{eq:ODE_Sys} take a particularly simple form:}
\begin{corollary}
\label{cor:ODE_ISpS}
Let \eqref{eq:ODE_Sys} be forward complete.
The following statements are equivalent:
\begin{enumerate}[label=(\roman*)]
    \item\label{enum:ISpS_ODEs_item1} \eqref{eq:ODE_Sys} is ISpS
    \item\label{enum:ISpS_ODEs_item11} For any $s>0$ there is a compact $s$-invariant set $\A\subset \R^n$: \eqref{eq:ODE_Sys} is ISS w.r.t. $\A$.
    \item\label{enum:ISpS_ODEs_item2} { \eqref{eq:ODE_Sys} is compact-ISS.}
        %There is a compact 0-invariant set $\A\subset \R^n$: \eqref{eq:ODE_Sys} is UAG w.r.t. $\A$.
    \item\label{enum:ISpS_ODEs_item3} There is a bounded set $\A\subset \R^n$: \eqref{eq:ODE_Sys} is LIM w.r.t. $\A$.
\end{enumerate}
\end{corollary}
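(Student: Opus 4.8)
The plan is to prove Corollary~\ref{cor:ODE_ISpS} by exploiting the previously established general theorems together with the special feature of ODEs, namely Proposition~\ref{prop:Approachability_equals_Uniform_Approachability}, which collapses the distinction between LIM and ULIM. Since forward completeness of an ODE implies the BRS property by Remark~\ref{rem:FC_and_BRS} (that is, \cite[Proposition 5.1]{LSW96}), the BRS hypothesis is automatically available throughout and need not be assumed separately. I would organize the argument as a cycle of implications:
\begin{center}
\ref{enum:ISpS_ODEs_item1} $\Rightarrow$ \ref{enum:ISpS_ODEs_item11} $\Rightarrow$ \ref{enum:ISpS_ODEs_item2} $\Rightarrow$ \ref{enum:ISpS_ODEs_item3} $\Rightarrow$ \ref{enum:ISpS_ODEs_item1}.
\end{center}

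First I would handle \ref{enum:ISpS_ODEs_item1} $\Rightarrow$ \ref{enum:ISpS_ODEs_item11}. Since an ODE with locally Lipschitz $f$ has a flow that is Lipschitz continuous on compact intervals, and since $\Uc = L_\infty(\R_+,\R^m)$ satisfies the concatenation-norm bound \eqref{eq:Norm_of_concatenation_upper_estimate} required by Lemma~\ref{lem:nonzero_Invariance_Aeps}, the hypotheses of Theorem~\ref{thm:ISpS_equivalent_ISS_wrt_bounded_sets} are met. That theorem already yields, for any $s>0$, a bounded $s$-invariant set $\A$ with ISS w.r.t. $\A$; the only thing to upgrade is boundedness to \emph{compactness}. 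Here I would use the finite-dimensionality of $\R^n$: the set $\A_{\eps,\gamma}$ constructed in Proposition~\ref{prop:ULIM_plus_BRS_implies_UAG} is bounded and can be replaced by its closure $\overline{\A_{\eps,\gamma}}$, which is compact in $\R^n$. I would verify that passing to the closure preserves both $s$-invariance (using continuity of the flow in the state, via the cocycle and Lipschitz estimates) and the ISS estimate (by continuity of $\|\cdot\|_\A$ and of $\phi$). The step \ref{enum:ISpS_ODEs_item11} $\Rightarrow$ \ref{enum:ISpS_ODEs_item2} is then almost immediate: ISS w.r.t. a compact $s$-invariant set implies UAG w.r.t. that set (ISS is stronger than UAG, and an $s$-invariant set with $s>0$ is in particular $0$-invariant), so the system is compact-ISS in the sense of Definition~\ref{def:compact-ISS}.

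Next, for \ref{enum:ISpS_ODEs_item2} $\Rightarrow$ \ref{enum:ISpS_ODEs_item3} I would argue that compact-ISS gives a compact (hence bounded) $0$-invariant set $\A$ with the UAG property, and UAG w.r.t. $\A$ trivially implies the limit property LIM w.r.t. $\A$ with the same gain $\gamma$: for each $x,u,\eps$ one simply takes any $t\geq\tau(\eps,\|x\|_\A)$ in \eqref{UAG_Absch}. This directly furnishes a bounded set w.r.t. which the system is LIM. Finally, the closing implication \ref{enum:ISpS_ODEs_item3} $\Rightarrow$ \ref{enum:ISpS_ODEs_item1} is where Proposition~\ref{prop:Approachability_equals_Uniform_Approachability} does the essential work: LIM w.r.t. a bounded set $\A$ is equivalent, for ODEs, to ULIM w.r.t. $\A$. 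Combined with the automatic BRS property, this is precisely condition \ref{enum:ISpS_item3} of Theorem~\ref{thm:RFC_weak_attr_imply_pUGAS}, which yields ISpS.

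I expect the main obstacle to be the compactness upgrade in \ref{enum:ISpS_ODEs_item1} $\Rightarrow$ \ref{enum:ISpS_ODEs_item11}, that is, verifying that replacing the bounded set $\A_{\eps,\gamma}$ by its closure does not destroy the $s$-invariance or the ISS estimate. The ISS inequality survives closure by a routine continuity/limiting argument, but $s$-invariance of the closure requires care: one must show that if a boundary point $y\in\overline{\A_{\eps,\gamma}}\setminus\A_{\eps,\gamma}$ is approached by points $y_k\in\A_{\eps,\gamma}$, then $\phi(t,y,u)\in\overline{\A_{\eps,\gamma}}$ for all admissible $u$, which follows from continuous dependence of the ODE flow on initial data together with $s$-invariance of $\A_{\eps,\gamma}$ itself. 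Everything else reduces to citing the general infinite-dimensional results already proved in the excerpt and invoking the LIM-ULIM equivalence specific to ODEs.
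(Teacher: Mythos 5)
Your proposal is correct and follows essentially the same route as the paper: the same cycle of implications, the same use of Theorem~\ref{thm:ISpS_equivalent_ISS_wrt_bounded_sets} (via Lipschitz flows and Lemma~\ref{lem:nonzero_Invariance_Aeps}) for \ref{enum:ISpS_ODEs_item1}~$\Rightarrow$~\ref{enum:ISpS_ODEs_item11} with a closure argument to upgrade boundedness to compactness, and the same combination of Proposition~\ref{prop:Approachability_equals_Uniform_Approachability} with Theorem~\ref{thm:RFC_weak_attr_imply_pUGAS} for \ref{enum:ISpS_ODEs_item3}~$\Rightarrow$~\ref{enum:ISpS_ODEs_item1}. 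The only simplification worth noting is that the ISS estimate survives passage to the closure for free, since $\|x\|_{\A}=\|x\|_{\overline{\A}}$ for every $x$, so no limiting argument is needed there.
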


\begin{proof}
According to \cite[Proposition 5.1]{LSW96}, for \eqref{eq:ODE_Sys} forward completeness is equivalent to the boundedness of reachability sets.

\ref{enum:ISpS_ODEs_item1} $\Rightarrow$ \ref{enum:ISpS_ODEs_item11}. 
{By \cite[Proposition 5.5]{LSW96}, } the flow of \eqref{eq:ODE_Sys} is Lipschitz continuous on compact subsets. The input space $\Uc:=L_\infty(\R_+,\R^m)$ satisfies the assumptions of Lemma~\ref{lem:nonzero_Invariance_Aeps}.
Application of Theorem~\ref{thm:ISpS_equivalent_ISS_wrt_bounded_sets} proves that
for any $s>0$ there is a bounded $s$-invariant set $\A\subset \R^n$: \eqref{eq:ODE_Sys} is ISS w.r.t. $\A$.

Since $f$ is Lipschitz continuous w.r.t. the first argument, the solutions of \eqref{eq:ODE_Sys} depend continuously on initial data. Hence $\overline{\A}$ is again $s$-invariant (as a closure of an $s$-invariant set). Since $\A$ is bounded, $\overline{\A}$ is compact.

\ref{enum:ISpS_ODEs_item11} $\Rightarrow$ \ref{enum:ISpS_ODEs_item2} $\Rightarrow$ \ref{enum:ISpS_ODEs_item3}. Clear.

\ref{enum:ISpS_ODEs_item3} $\Rightarrow$ \ref{enum:ISpS_ODEs_item1}. Follows by 
Proposition~\ref{prop:Approachability_equals_Uniform_Approachability} and Theorem~\ref{thm:RFC_weak_attr_imply_pUGAS}.
%\ref{enum:ISpS_ODEs_item1} $\Iff$ \ref{enum:ISpS_ODEs_item3}. According to \cite[Proposition 5.1]{LSW96} forward completeness of \eqref{eq:ODE_Sys} is equivalent to the boundedness of reachability sets of \eqref{eq:ODE_Sys}. Proposition~\ref{prop:Approachability_equals_Uniform_Approachability} shows the claim.
%
%\ref{enum:ISpS_ODEs_item1} $\Iff$ \ref{enum:ISpS_ODEs_item2}. 
%According to Theorem~\ref{thm:RFC_weak_attr_imply_pUGAS} ISpS of \eqref{eq:ODE_Sys} is equivalent to existence of a bounded 0-invariant set $\A$ so that \eqref{eq:ODE_Sys} is UAG w.r.t. $\A$.
%Since $f$ is Lipschitz continuous w.r.t. the first argument, the solutions of \eqref{eq:ODE_Sys} depend continuously on initial data. 
%Hence $\overline{\A}$ is again 0-invariant (as a closure of a 0-invariant set). Since $\A$ is bounded, $\overline{\A}$ is compact.
%Finally, \eqref{eq:ODE_Sys} is UAG w.r.t. $\overline{\A}$ since $\A \subset \overline{\A}$.
\end{proof}

%{
%\begin{remark}
%\label{rem:Relations_to_SoW96}
%%With this notation, the equivalence \ref{enum:ISpS_ODEs_item1} $\Iff$ \ref{enum:ISpS_ODEs_item2} in Corollary~\ref{cor:ODE_ISpS} reads: \eqref{eq:ODE_Sys} is practically ISS iff \eqref{eq:ODE_Sys} is compact ISS, and we recover \cite[Proposition VI.3]{SoW96}.
%
%In \cite[Lemma I.4]{SoW96} it was shown that systems \eqref{eq:ODE_Sys}, which are LIM w.r.t. $\{0\}$ are necessarily UGB. 
%The equivalence \ref{enum:ISpS_ODEs_item1} $\Iff$ \ref{enum:ISpS_ODEs_item3} in Corollary~\ref{cor:ODE_ISpS} is a substantial strengthening of this result.
%\end{remark}
%}

\section{Conclusion}

We have proved criteria for practical ISS for a general class of infinite-dimensional systems in terms of uniform limit property and in terms of ISS. The results are new already for the special case of ODE systems, and they strengthen substantially existing criteria for ISpS of ODEs.

%    Bibliographies can be prepared with BibTeX using amsplain,
%    amsalpha, or (for "historical" overviews) natbib style.
%\bibliographystyle{amsplain}
\bibliographystyle{IEEEtran}

%    Insert the bibliography data here.

%\bibliography{C:/Arbeiten/Werke/TEX_Data/Mir_LitList}
%\bibliography{C:/Users/Andrii/GoogleDrive/TEX_Data/Mir_LitList}

\bibliography{C:/GoogleDrive/TEX_Data/Mir_LitList}

%\printbibliography
%{\bibliography{Mir_LitList_TAMS,TAMS}}
%
%\begin{IEEEbiography}[{\includegraphics[width=1in,height=1.25in,clip,keepaspectratio]{AndriiTAC.jpg}}]{Andrii Mironchenko}
%received his MSc at the I.I. Mechnikov Odessa National University in 2008 and his PhD at the University of Bremen in 2012. 
%He has held research position at the University of W\"urzburg and was a Postdoctoral JSPS fellow at the Kyushu Institute of Technology (2013--2014). 
%In 2014 he joined the Faculty of Mathematics and Computer Science at the University of Passau. 
%His research interests include infinite-dimensional systems, stability theory, hybrid systems and applications of control theory to biological systems. 
%\end{IEEEbiography}
%%\vfill
%

\end{document}